\title{Free quadri-algebras and dual quadri-algebras}
\date{}
\author{Lo\"\i c Foissy\\ \\
{\small \it Fédération de Recherche Mathématique du Nord Pas de Calais FR 2956}\\
{\small \it Laboratoire de Mathématiques Pures et Appliquées Joseph Liouville}\\
{\small \it Université du Littoral Côte d'Opale-Centre Universitaire de la Mi-Voix}\\ 
{\small \it 50, rue Ferdinand Buisson, CS 80699,  62228 Calais Cedex, France}\\ \\
{\small \it email: foissy@lmpa.univ-littoral.fr}}
\newtheorem{defi}{\indent Definition}
\newtheorem{cor}[defi]{\indent Corollary}
\newtheorem{theo}[defi]{\indent Theorem}
\newtheorem{prop}[defi]{\indent Proposition}
\newenvironment{proof}{{\bf Proof.}}{\hfill $\Box$}
\renewcommand{\P}{\mathbf{P}}
\newcommand{\FQSym}{\mathbf{FQSym}}
\newcommand{\WQSym}{\mathbf{WQSym}}
\newcommand{\R}{\mathcal{R}}
\newcommand{\totimes}{\overline{\otimes}}
\newcommand{\tdelta}{\tilde{\Delta}}
\newcommand{\F}{\mathbf{F}}
\newcommand{\D}{\mathcal{D}}
\renewcommand{\S}{\mathfrak{S}}
\newcommand{\Quad}{\mathbf{Quad}}
\newcommand{\Dend}{\mathbf{Dend}}
\newcommand{\Dias}{\mathbf{Dias}}
\begin{document}

\maketitle

ABSTRACT. We study quadri-algebras and dual quadri-algebras.
We describe the free quadri-algebra on one generator as a subobject of the Hopf algebra of permutations $\FQSym$,
proving a conjecture due to Aguiar and Loday, using that the operad of quadri-algebras can be obtained from the operad of dendriform algebras 
by both black and white Manin products. We also give a combinatorial description of free dual quadri-algebras.
A notion of quadri-bialgebra is also introduced, with applications to the Hopf algebras $\FQSym$ and $\WQSym$.\\

AMS CLASSIFICATION.16W10; 18D50; 16T05.\\

KEYWORDS. Quadri-algebras; Koszul duality; Combinatorial Hopf algebras.

\tableofcontents

\section*{Introduction}

An algebra with an associativity splitting is an algebra whose associative product $\star$ can be written as a sum of a certain number
of (generally nonassociative) products, satisfying certain compatibilities.
For example, dendriform algebras \cite{FoissyDend,Loday} are equipped with two bilinear products $\prec$ and $\succ$, such that for all $x,y,z$:
\begin{align*}
(x\prec y)\prec z&=x\prec (y\prec z+y\succ z),\\
(x \succ y)\prec z&=x\succ(y\prec z),\\
(x \prec y+x\succ y)\succ z&=x\succ(y\succ z).
\end{align*}
Summing these axioms, we indeed obtain that $\star=\prec+\succ$ is associative. Another example is given by quadri-algebras, which are equipped with 
four products $\nwarrow$, $\swarrow$, $\searrow$ and $\nearrow$, in such a way that:
\begin{itemize}
\item $\leftarrow=\nwarrow+\swarrow$ and $\rightarrow=\searrow+\nearrow$ are dendriform products, 
\item $\uparrow=\nwarrow+\nearrow$ and $\downarrow=\swarrow+\searrow$ are dendriform products.
\end{itemize}
Shuffle algebras or the algebra of free quasi-symmetric functions $\FQSym$ are examples of quadri-algebras.
No combinatorial description of the operad $\Quad$ of quadri-algebra is known, but a formula for its generating formal series is conjectured in \cite{Loday}
and proved in \cite{Vallette}, as well as the koszulity of this operad. 
A description of $\Quad$ is given with the help of the black Manin product on nonsymmetric operads $\blacksquare$,
namely $\Quad=\Dend \blacksquare \Dend$, where $\Dend$ is the nonsymmetric operad of dendriform algebras (this product is denoted
by $\Box$ in \cite{Guo,Loday3}). It is also suspected that the sub-quadri-algebra of $\FQSym$ generated by the permutation $(12)$ is free.
We give here a proof of this conjecture (Corollary \ref{7}). We use for this that $\Quad$ is also equal to $\Dend \Box \Dend$ (Corollary \ref{5}),
and consequently can be seen as a suboperad of $\Dend \otimes \Dend$: hence, free $\Dend \otimes \Dend$-algebras contain
free quadri-algebras, a result which is applied to $\FQSym$.
We also combinatorially describe the Koszul dual $\Quad^!$ of $\Quad$, and prove its koszulity  with the rewriting method of \cite{Hoffbeck,Dotsenko,Loday2}.

The last section is devoted to a study of the compatibilities between the quadri-algebra structure of $\FQSym$ and its dual 
quadri-coalgebra structure: this leads to the notion of quadri-bialgebra (Definition \ref{10}).
Another example of quadri-bialgebra is given by the Hopf algebra of packed words $\WQSym$. It is observed that, unlike the case
of dendriform bialgebras, there is no rigidity theorem for quadri-bialgebras; indeed:
\begin{itemize}
\item $\FQSym$ and $\WQSym$ are not free quadri-algebras, nor cofree quadri-coalgebras.
\item $\FQSym$ and $\WQSym$ are not generated, as quadri-algebras, by their primitive elements, in the quadri-coalgebraic sense.\\
\end{itemize}

{\bf Aknowledgments.} The research leading these results was partially supported by the French National Research Agency under the reference
ANR-12-BS01-0017. I would like to thank Bruno Vallette for his precious comments, suggestions and help. \\

{\bf Notations.}  \begin{enumerate}
\item We denote by $K$ a commutative field. All the objects (vector spaces, algebras, coalgebras, operads$\ldots$) of this text
are taken over $K$.
\item For all $n \geq 1$, we denote by $[n]$ the set of integers $\{1,2,\ldots,n\}$.
\end{enumerate}

\section{Reminders on quadri-algebras and operads}

\subsection{Definitions and examples of quadri-algebras}

\begin{defi}\begin{enumerate}
\item A quadri-algebra is a family $(A,\nwarrow,\swarrow,\searrow,\nearrow)$, where $A$ is a vector space and 
$\nwarrow$, $\swarrow$, $\searrow$, $\nearrow$ are products on $A$, such that for all $x,y,z \in A$:
\begin{align*}
(x\nwarrow y)\nwarrow z&=x \nwarrow (y\star z),&(x\nearrow y) \nwarrow z&=x \nearrow (y\leftarrow z),
&(x \uparrow y) \nearrow z&=x \nearrow (y \rightarrow z),\\
(x\swarrow y)\nwarrow z&=x \swarrow (y\uparrow z),&(x\searrow y) \nwarrow z&=x \searrow (y\nwarrow z),
&(x \downarrow y) \nearrow z&=x \searrow (y \nearrow z),\\
(x\leftarrow y)\swarrow z&=x \swarrow (y\downarrow z),&(x\rightarrow y) \swarrow z&=x \searrow (y\swarrow z),
&(x \star y) \searrow z&=x \searrow (y \searrow z),
\end{align*}
where:
\begin{align*}
\leftarrow&=\nwarrow+\swarrow,&
\rightarrow&=\nearrow+\searrow,&
\uparrow&=\nwarrow+\nearrow,&
\downarrow&=\swarrow+\searrow,
\end{align*} \vspace{-1cm} \begin{align*}
\star&=\nwarrow+\swarrow+\searrow+\nearrow=\leftarrow+\rightarrow=\uparrow+\downarrow.
\end{align*}
These relations will be considered as the entries of a $3\times 3$ matrix, and will be refered as relations $(1,1)\ldots (3,3)$.
\item A quadri-coalgebra is a family $(C,\Delta_\nwarrow,\Delta_\swarrow,\Delta_\searrow,\Delta_\nearrow)$, where $C$ is a vector space
and $\Delta_\nwarrow$, $\Delta_\swarrow$, $\Delta_\searrow$, $\Delta_\nearrow$ are coproducts on $C$, such that:
\begin{align*}
(\Delta_\nwarrow\otimes Id)\circ \Delta_\nwarrow&=(Id \otimes \Delta_*)\circ \Delta_\nwarrow,&
(\Delta_\swarrow\otimes Id)\circ \Delta_\nwarrow&=(Id \otimes \Delta_\uparrow)\circ \Delta_\swarrow,\\
(\Delta_\nearrow\otimes Id)\circ \Delta_\nwarrow&=(Id \otimes \Delta_\leftarrow)\circ \Delta_\nearrow,&
(\Delta_\searrow\otimes Id)\circ \Delta_\nwarrow&=(Id \otimes \Delta_\nwarrow)\circ \Delta_\searrow,\\
(\Delta_\uparrow\otimes Id)\circ \Delta_\nearrow&=(Id \otimes \Delta_\rightarrow)\circ \Delta_\nearrow;&
(\Delta_\downarrow\otimes Id)\circ \Delta_\nearrow&=(Id \otimes \Delta_\nearrow)\circ \Delta_\searrow;
\end{align*} \begin{align*}
(\Delta_\leftarrow\otimes Id)\circ \Delta_\swarrow&=(Id \otimes \Delta_\downarrow)\circ \Delta_\swarrow,\\
(\Delta_\rightarrow\otimes Id)\circ \Delta_\swarrow&=(Id \otimes \Delta_\swarrow)\circ \Delta_\searrow,\\
(\Delta_*\otimes Id)\circ \Delta_\searrow&=(Id \otimes \Delta_\searrow)\circ \Delta_\searrow,
\end{align*}
with:
\begin{align*}
\Delta_\leftarrow&=\Delta_\searrow+\Delta_\nearrow,&\Delta_\rightarrow&=\Delta_\nwarrow+\Delta_\swarrow,&
\Delta_\uparrow&=\Delta_\nwarrow+\Delta_\nearrow,&\Delta_\downarrow&=\Delta_\swarrow+\Delta_\searrow,
\end{align*}\vspace{-1cm} \begin{align*}
\Delta_*&=\Delta_\nwarrow+\Delta_\swarrow+\Delta_\searrow+\Delta_\nearrow.
\end{align*}\end{enumerate}\end{defi}

{\bf Remarks.} \begin{enumerate}
\item If $A$ is a finite-dimensional quadri-algebra, then its dual $A^*$ is a quadri-coalgebra, with $\Delta_\diamond =\diamond^*$ for all
$\diamond \in \{\nwarrow,\swarrow,\searrow,\nearrow,\leftarrow,\rightarrow,\uparrow,\downarrow,\star\}$.
\item If $C$ is a quadri-coalgebra (even not finite-dimensional), then $C^*$ is a quadri-algebra, with $\diamond=\Delta_\diamond^*$ for all
$\diamond \in \{\nwarrow,\swarrow,\searrow,\nearrow,\leftarrow,\rightarrow,\uparrow,\downarrow,\star\}$.
\item  Let $A$ be a quadri-algebra. Adding each row of the matrix of relations:
\begin{align*}
(x \uparrow y)\uparrow z&=x \uparrow (y \star z),\\
(x\downarrow y)\uparrow z&=x \downarrow (y \uparrow z),\\
(x \star y) \downarrow z&=x \downarrow (y\downarrow z).
\end{align*}
Hence, $(A,\uparrow,\downarrow)$ is a dendriform algebra. Adding each column of the matrix of relations:
\begin{align*}
(x \leftarrow y)\leftarrow z&=x \leftarrow (y \star z),&(x\rightarrow y)\leftarrow z&=x \rightarrow (y \leftarrow z),
&(x \star y) \rightarrow z&=x \rightarrow (y\rightarrow z).
\end{align*}
Hence, $(A,\leftarrow,\rightarrow)$ is a dendriform algebra.  The associative (non unitary)  product associated to both these dendriform structures is $\star$. 
\item Dually, if $C$ is a quadri-coalgebra, $(C,\Delta_\uparrow,\Delta_\downarrow)$ and $(C,\Delta_\leftarrow,\Delta_\rightarrow)$
are dendriform coalgebras. The associated coassociative (non counitary) coproduct is $\Delta_*$.
\end{enumerate}

{\bf Examples.} \begin{enumerate}
\item Let $V$ be a vector space. The augmentation ideal of the tensor algebra $T(V)$ is given four products defined in the following way:
for all $v_1,\ldots,v_k,v_{k+1},\ldots,v_{k+l}\in V$, $k,l \geq 1$,
\begin{align*}
v_1\ldots v_k \nwarrow v_{k+1}\ldots v_{k+l}&=\sum_{\substack{\sigma \in Sh(k,l),\\ \sigma^{-1}(1)=1,\:\sigma^{-1}(k+l)=k}}
v_{\sigma^{-1}(1)}\ldots v_{\sigma^{-1}(k+l)},\\
v_1\ldots v_k \swarrow v_{k+1}\ldots v_{k+l}&=\sum_{\substack{\sigma \in Sh(k,l),\\ \sigma^{-1}(1)=k+1,\:\sigma^{-1}(k+l)=k}}
v_{\sigma^{-1}(1)}\ldots v_{\sigma^{-1}(k+l)},\\
v_1\ldots v_k \searrow v_{k+1}\ldots v_{k+l}&=\sum_{\substack{\sigma \in Sh(k,l),\\ \sigma^{-1}(1)=k+1,\:\sigma^{-1}(k+l)=k+l}}
v_{\sigma^{-1}(1)}\ldots v_{\sigma^{-1}(k+l)},\\
v_1\ldots v_k \nearrow v_{k+1}\ldots v_{k+l}&=\sum_{\substack{\sigma \in Sh(k,l),\\ \sigma^{-1}(1)=1,\:\sigma^{-1}(k+l)=k+l}}
v_{\sigma^{-1}(1)}\ldots v_{\sigma^{-1}(k+l)},
\end{align*}
where $Sh(k,l)$ is the set of $(k,l)$-shuffles, that is to say permutations $\sigma\in \S_{k+l}$
such that $\sigma(1)<\ldots< \sigma(k)$ and $\sigma(k+1)<\ldots<\sigma(k+l)$.
The associated associative product is the usual shuffle product.
\item  The augmentation ideal of the Hopf algebra $\FQSym$ of permutations introduced in \cite{Malvenuto} and studied in \cite{Duchamp} 
is also a quadri-algebra, as mentioned in \cite{Aguiar}. For all permutations $\alpha \in \S_k$, $\beta \in \S_l$, $k,l\geq1$:
\begin{align*}
\alpha \nwarrow \beta&=\sum_{\substack{\sigma \in Sh(k,l),\\ \sigma^{-1}(1)=1,\:\sigma^{-1}(k+l)=k}} 
(\alpha\otimes \beta)\circ \sigma^{-1},\\
\alpha \swarrow \beta&=\sum_{\substack{\sigma \in Sh(k,l),\\ \sigma^{-1}(1)=k+1,\:\sigma^{-1}(k+l)=k}}
(\alpha\otimes \beta)\circ \sigma^{-1},\\
\alpha \searrow \beta&=\sum_{\substack{\sigma \in Sh(k,l),\\ \sigma^{-1}(1)=k+1,\:\sigma^{-1}(k+l)=k+l}} 
(\alpha\otimes \beta)\circ \sigma^{-1},\\
\alpha \nearrow \beta&=\sum_{\substack{\sigma \in Sh(k,l),\\ \sigma^{-1}(1)=1,\:\sigma^{-1}(k+l)=k+l}} 
(\alpha\otimes \beta)\circ \sigma^{-1}.
\end{align*}
As $\FQSym$ is self-dual, its coproduct can also be split into four parts,
making it a quadri-coalgebra. As the pairing on $\FQSym$ is defined by $\langle \sigma,\tau\rangle=\delta_{\sigma,\tau^{-1}}$
for any permutations $\sigma,\tau$, we deduce that if $\sigma\in \S_n$, $n\geq 1$, with the notations of \cite{Malvenuto}:
\begin{align*}
\Delta_\nwarrow(\sigma)&=\sum_{\sigma^{-1}(1),\sigma^{-1}(n) \leq i<n}
Std(\sigma(1)\ldots \sigma(i))\otimes Std(\sigma(i+1)\ldots \sigma(n)),\\
\Delta_\swarrow(\sigma)&=\sum_{\sigma^{-1}(n) \leq i < \sigma^{-1}(1)}
Std(\sigma(1)\ldots \sigma(i))\otimes Std(\sigma(i+1)\ldots \sigma(n)),\\
\Delta_\searrow(\sigma)&=\sum_{1\leq i< \sigma^{-1}(1) ,\sigma^{-1}(n)}
Std(\sigma(1)\ldots \sigma(i))\otimes Std(\sigma(i+1)\ldots \sigma(n)),\\
\Delta_\nearrow(\sigma)&=\sum_{\sigma^{-1}(1) \leq i <\sigma^{-1}(n)}
Std(\sigma(1)\ldots \sigma(i))\otimes Std(\sigma(i+1)\ldots \sigma(n)).
\end{align*}
The compatibilites between these products and coproducts will be studied in Proposition \ref{11}. For example:
\begin{align*}
(12)\nwarrow (12)&=(1342),&\Delta_\nwarrow((3412))&=(231)\otimes (1),&\Delta_\nwarrow((2143))&=(213)\otimes (1),\\
(12)\swarrow (12)&=(3142)+(3412),&\Delta_\swarrow((3412))&=(12)\otimes (12),&\Delta_\swarrow((2143))&=0,\\
(12)\searrow (12)&=(3124),&\Delta_\searrow((3412))&=(1)\otimes(312),&\Delta_\searrow((2143))&=(1)\otimes(132),\\
(12)\nearrow (12)&=(1234)+(1324),&\Delta_\nearrow((3412))&=0,&\Delta_\nearrow((2143))&=(21)\otimes (21).
\end{align*}
The dendriform algebra $(\FQSym,\leftarrow,\rightarrow)$ and the dendriform coalgebra
$(\FQSym, \Delta_\leftarrow,\Delta_\rightarrow)$ are decribed in \cite{FoissyDend,FoissyDend2};
the dendriform algebra $(\FQSym,\uparrow,\downarrow)$ and the dendriform coalgebra
$(\FQSym, \Delta_\uparrow,\Delta_\downarrow)$ are decribed in \cite{FoissyPatras}. Both dendriform algebras are free, 
and both dendriform coalgebras are cofree, by the dendriform rigidity theorem \cite{FoissyDend}.
Note that $\FQSym$ is not free as a quadri-algebra, as $(1)\nwarrow (1)=0$.
\item The dual of the Hopf algebra of totally assigned graphs \cite{Manchon} is a quadri-coalgebra.
\end{enumerate}

\subsection{Nonsymmetric operads}

We refer to \cite{Loday2,Markl,Vallette} for the usual definitions and properties of operads and nonsymmetric operads.\\

{\bf Notations and reminders.} \begin{itemize}
\item Let $V$ be a vector space. The free nonsymmetric operad generated in arity $2$ by $V$ is denoted by $\F(V)$. 
If we fix a basis $(v_i)_{i\in I} $ of $V$, then for all $n \geq 1$, a basis of $\F(V)_n$ is given by the set of planar binary  trees with $n$ leaves, whose 
$(n-1)$ internal vertices are decorated by elements of $\{v_i\mid i\in I\}$. The operadic composition is given by the grafting of trees on leaves.
If $V$ is finite-dimensional, then for all $n\geq 1$, $\F(V)_n$ is finite-dimensional, and:
$$dim(\F(V)_n)=\frac{1}{n}\binom{2n-2}{n-1}dim(V)^n.$$
\item Let $\P$ a nonsymmetric operad and $V$ a vector space. A structure of $\P$-algebra on $V$ is a family of maps:
$$\left\{\begin{array}{rcl}
\P(n)\otimes V^{\otimes n}&\longrightarrow&V\\
p\otimes v_1\otimes \ldots \otimes v_n&\longrightarrow&p.(v_1,\ldots,v_n),
\end{array}\right.$$
satisfying some compatibilities with the composition of $\P$. 
\item The free $\P$-algebra generated by the vector space $V$ is, as a vector space:
$$F_\P(V)=\bigoplus_{n\geq 0} \P(n) \otimes V^{\otimes n};$$
the action of $\P$ on $F_\P(V)$ is given by:
$$p.(p_1\otimes w_1,\ldots,p_n \otimes w_n)=p\circ (p_1,\ldots,p_n)\otimes w_1\otimes \ldots \otimes w_n.$$
\item Let $\P=(\P_n)_{n \geq 1}$ be a nonsymmetric operad. It is quadratic if :
\begin{itemize}
\item It is generated by $G_\P=\P_2$.
\item Let $\pi_\P:\F(G_\P)\longrightarrow \P$ be the canonical morphism from $\F(G_\P)$ to $\P$; then its kernel is generated, as an operadic ideal,
by $Ker(\pi_\P)_3=Ker(\pi_\P)\cap \F(G_\P)_3$.
\end{itemize}
\end{itemize}
If $\P$ is quadratic, we put $G_\P=\P_2$, and $R_\P=Ker(\pi_\P) _3$. By definition, these two spaces entirely determine
$\P$, up to an isomorphism. \\

{\bf Examples}. \begin{enumerate}
\item The nonsymmetric operad $\Quad$ of quadri-algebras is quadratic. It is generated by $G_\Quad=Vect(\nwarrow,\swarrow,\searrow,\nearrow)$,
and $R_\Quad$ is the linear span of the nine following elements:
\begin{align*}
\bdtroisun{$\nwarrow$}{$\nwarrow$}&-\bdtroisdeux{$\nwarrow$}{$\star$},&
\bdtroisun{$\nwarrow$}{$\nearrow$}&-\bdtroisdeux{$\nearrow$}{$\leftarrow$},&
\bdtroisun{$\nearrow$}{$\uparrow$}&-\bdtroisdeux{$\nearrow$}{$\rightarrow$},\\
\bdtroisun{$\nwarrow$}{$\swarrow$}&-\bdtroisdeux{$\swarrow$}{$\uparrow$},&
\bdtroisun{$\nwarrow$}{$\searrow$}&-\bdtroisdeux{$\searrow$}{$\nwarrow$},&
\bdtroisun{$\nearrow$}{$\downarrow$}&-\bdtroisdeux{$\searrow$}{$\nearrow$},\\
\bdtroisun{$\swarrow$}{$\leftarrow$}&-\bdtroisdeux{$\swarrow$}{$\downarrow$},&
\bdtroisun{$\swarrow$}{$\rightarrow$}&-\bdtroisdeux{$\searrow$}{$\swarrow$},&
\bdtroisun{$\searrow$}{$\star$}&-\bdtroisdeux{$\searrow$}{$\searrow$}.
\end{align*}
As $dim(F(G_\Quad)_3)=32$, $dim(\Quad_3)=32-9=23$.
\item The nonsymmetric operad $\Dend$ of dendriform algebras is quadratic. It is generated by $G_\Dend=Vect(\prec,\succ)$,
and $R_\Dend$ is the linear span of the three following elements:
\begin{align*}
\bdtroisun{$\prec$}{$\prec$}&-\bdtroisdeux{$\prec$}{$\star$},&
\bdtroisun{$\prec$}{$\succ$}&-\bdtroisdeux{$\succ$}{$\prec$},&
\bdtroisun{$\succ$}{$\star$}&-\bdtroisdeux{$\succ$}{$\succ$}.
\end{align*}\end{enumerate}

The nonsymmetric-operad $\Quad$ of quadri-algebras, being quadratic, has a Koszul dual $\Quad^!$.
The following formulas for the generating formal series of $\Quad$ and $\Quad^!$ has been conjectured in \cite{Aguiar} and proved in \cite{Vallette},
as well as the koszulity:

\begin{prop}\label{2}\begin{enumerate}
\item For all $n \geq 1$, $\displaystyle dim (\Quad(n))=\sum_{j=n}^{2n-1}\binom{3n}{n+1+j}\binom{j-1}{j-n}$.
This is sequence A007297 in  \cite{Sloane}.
\item For all $n \geq 1$, $dim(\Quad^!(n))=n^2$.
\item The operad of quadri-algebras is Koszul.
\end{enumerate}\end{prop}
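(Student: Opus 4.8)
The plan is to prove the three assertions in a different logical order from the one in which they are listed: I would first settle the Koszulity (item~3), then compute $\dim(\Quad^!(n))$ (item~2) from an explicit combinatorial model, and only then deduce the dimension formula for $\Quad$ (item~1) by the duality of generating series. The point is that items~1 and~2 become essentially formal once Koszulity is available, so all the genuine work should be concentrated in item~3.

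For Koszulity I would run the rewriting (Gr\"obner-basis) method of \cite{Hoffbeck,Dotsenko,Loday2}. Fix a total order on the four generators, say $\nwarrow<\swarrow<\searrow<\nearrow$, and extend it to a monomial order on the planar binary trees with two internal vertices decorated by these generators (a path-lexicographic order is the natural first try). Each of the nine elements spanning $R_\Quad$ then has a well-defined leading tree, which I orient into a rewriting rule sending that tree to a combination of smaller ones. The operad is Koszul as soon as this quadratic rewriting system is confluent; since the rules are quadratic it suffices to resolve the critical pairs, and these all live in arity~$4$, one for each way of superposing two arity-$3$ leading monomials along a common internal edge. For each such overlap I would reduce the two sides independently and check that they reach a common normal form. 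Because Koszulity of a quadratic operad is equivalent to Koszulity of its Koszul dual, I am free to carry out this analysis on whichever of $\Quad$, $\Quad^!$ keeps the bookkeeping cleaner.

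Item~2 I would extract from the explicit description of $\Quad^!$. In arity~$3$ the space of relations is $R_{\Quad^!}=R_\Quad^{\perp}$, of dimension $32-9=23$, so already $\dim(\Quad^!(3))=32-23=9=3^2$. More importantly, the confluent rewriting system of the previous step provides a basis of irreducible (normal) decorated trees for $\Quad^!$, and I would produce a bijection between the normal trees of arity~$n$ and the set $[n]\times[n]$ and count, obtaining $n^{2}$. This is consistent with the presentation $\Quad^!=\Dend^!\,\Box\,\Dend^!=\Dias\,\Box\,\Dias$ together with $\dim(\Dias(n))=n$, which I would use as a guide for the bijection. I would, however, \emph{not} try to deduce $n^{2}$ from any multiplicativity of the white Manin product: the companion identity $\Quad=\Dend\,\Box\,\Dend$ with $\dim(\Dend(3))=5$ but $\dim(\Quad(3))=23\neq 25$ shows the white product is not dimension-multiplicative, so the value $n^{2}$ must genuinely come from the normal-form count.

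Finally, for item~1 I would invoke the Koszul-duality relation between the Hilbert series of a reduced nonsymmetric Koszul operad, namely $g_{\Quad^!}\bigl(-g_\Quad(-t)\bigr)=t$ with $g_\P(t)=\sum_{n\geq 1}\dim(\P(n))\,t^{n}$. From $\dim(\Quad^!(n))=n^{2}$ we get $g_{\Quad^!}(t)=\dfrac{t(1+t)}{(1-t)^{3}}$, so $v=g_\Quad(t)$ is the functional inverse determined by $t=\dfrac{v(1-v)}{(1+v)^{3}}$. Lagrange inversion then yields
$$\dim(\Quad(n))=\frac1n\,[w^{n-1}]\,\frac{(1+w)^{3n}}{(1-w)^{n}},$$
and expanding both factors and reindexing by $j=2n-1-a$ turns this into the binomial sum of the statement, the sequence A007297 of \cite{Sloane}. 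The main obstacle, by a wide margin, is the confluence verification in the second step: it is a finite but sizeable inspection of the arity-$4$ ambiguities, and the whole scheme fails if the chosen monomial order does not make the system confluent, so the bulk of the care goes into selecting the order and organising the critical-pair reductions.
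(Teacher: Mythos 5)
The paper does not actually prove this proposition: items 1--3 are quoted from \cite{Vallette} (the formulas having been conjectured in \cite{Aguiar}), and the only piece of your plan the paper carries out itself is the rewriting-method proof of Koszulity, which appears later as a separate theorem (order $\searrow<\nearrow<\swarrow<\nwarrow$ on the generators of $\Quad^!$, $23$ rules, $156$ critical monomials checked by computer). Your route is therefore genuinely different in that it makes the proposition self-contained: Koszulity by rewriting, then $\dim(\Quad^!(n))=n^2$ by counting normal forms, then $\dim(\Quad(n))$ from the functional equation $g_{\Quad^!}(-g_{\Quad}(-t))=t$ by Lagrange inversion. The analytic part checks out: from $g_{\Quad^!}(t)=t(1+t)/(1-t)^3$ one indeed gets $t=v(1-v)/(1+v)^3$ for $v=g_\Quad(t)$, and $\frac1n[w^{n-1}](1+w)^{3n}(1-w)^{-n}$ yields $4,23,156,1162,\dots$. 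Your caution that the white Manin product is not dimension-multiplicative is also well placed ($\dim\Dend(3)^2=25\neq 23$), whereas for $\Quad^!=\Dias\otimes\Dias$ the count $n\cdot n=n^2$ does work. What your plan buys over the paper's citation is independence from \cite{Vallette}; what it costs is that every step leans on the confluence verification.

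Two caveats. First, the two genuinely laborious steps are only described, not done: you neither exhibit a monomial order for which all critical pairs are confluent (the paper needs a computer for its $156$ diagrams) nor construct the promised bijection between normal trees of arity $n$ and $[n]^2$. For the latter, a cleaner route is to combine the upper bound $\dim\Quad^!(n)\le\#\{\text{normal forms}\}$ with the lower bound coming from the surjection of the free one-generated $\Quad^!$-algebra onto the explicit algebra $A_\R$ constructed in the paper, whose degree-$n$ component visibly has dimension $n^2$. Second, your reindexing $j=2n-1-a$ does not land exactly on the displayed formula: Lagrange inversion gives
\begin{equation*}
\dim\Quad(n)=\frac1n\sum_{j=n}^{2n-1}\binom{3n}{n+1+j}\binom{j-1}{j-n},
\end{equation*}
and the prefactor $\frac1n$ is absent from the statement as printed (for $n=2$ the bare sum is $6+2=8$, not $4$; the two agree only at $n=1$). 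So your derivation in fact exposes a typo in the stated formula rather than matching it verbatim, and the sentence claiming the expansion ``turns into the binomial sum of the statement'' should be corrected accordingly.
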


\section{The operad of quadri-algebras and its Koszul dual}

\subsection{Dual quadri-algebras}

Algebras on $\Quad^!$ will be called dual quadri-algebras. This operad $\Quad^!$ is described in \cite{Vallette}
in terms of the white Manin product. Let us give an explicit description.

\begin{prop}
A dual quadri-algebra is a family $(A,\nwarrow,\swarrow,\searrow,\nearrow)$, where $A$ is a vector space and 
$\nwarrow,\swarrow,\searrow,\nearrow:A\otimes A \longrightarrow A$, such that for all $x,y,z \in A$:
\begin{align*}
(x \nwarrow y)\nwarrow z&=x \nwarrow (y\nwarrow z)=x\nwarrow (y\swarrow z)=x \nwarrow (y\searrow z)=x\nwarrow (y \nearrow z),\\
(x \nearrow y)\nwarrow z&=x \nearrow (y\nwarrow z)=x \nearrow(y \swarrow z),\\
(x \nwarrow y)\nearrow z&=(x \nearrow y)\nearrow z=x \nearrow (y\searrow z)=x \nearrow (y\nearrow z),\\
(x \swarrow y)\nwarrow z&=x \swarrow (y\nwarrow z)=x \swarrow (y\nearrow z),\\
(x \searrow y) \nwarrow z&=x \searrow (y\nwarrow z),\\
(x \swarrow y)\nearrow z&=(x \searrow y) \nearrow z=x \searrow (y \nearrow z),\\
(x \nwarrow y)\swarrow z&=(x\swarrow y)\swarrow z=x \swarrow (y\swarrow z)=x \swarrow (y \searrow y),\\
(x \searrow y)\swarrow z&=x( \nearrow y)\swarrow z=x \searrow(y \swarrow z),\\
(x \nwarrow y) \searrow z&=(x \swarrow y) \searrow z=(x \searrow y) \searrow z=(x \nearrow y) \searrow z=x \searrow (y\searrow z).
\end{align*}
These groups of relations are denoted by $(1)^!,\ldots, (9)^!$.
Note that the four products $\nwarrow,\swarrow,\searrow,\nearrow$ are associative. 
\end{prop}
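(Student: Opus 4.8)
The plan is to identify $\Quad^!$ through the standard construction of the Koszul dual of a binary quadratic nonsymmetric operad. Since $\Quad=\F(G_\Quad)/(R_\Quad)$ with $G_\Quad=\mathrm{Vect}(\nwarrow,\swarrow,\searrow,\nearrow)$ and $\dim R_\Quad=9$, the operad $\Quad^!$ is generated by the dual space $G_\Quad^*$, which I identify with $\mathrm{Vect}(\nwarrow,\swarrow,\searrow,\nearrow)$ via the dual basis, and its relations form the orthogonal $R_\Quad^{\perp}\subseteq \F(G_\Quad^*)_3$. As $\dim \F(\,\cdot\,)_3=2\cdot 4^2=32$, we get $\dim R_\Quad^{\perp}=32-9=23$. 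It therefore suffices to exhibit $23$ linearly independent relations lying in $R_\Quad^{\perp}$: they then define $\Quad^!$ and the stated description follows.

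Here $\F(\,\cdot\,)_3$ decomposes as a sum of two copies of $G\otimes G$ (resp.\ $G^*\otimes G^*$), indexed by the two planar binary trees with three leaves; write $L_{\mu,\nu}$ for the left comb $(x\,\nu\,y)\,\mu\,z$ and $R_{\mu,\nu}$ for the right comb $x\,\mu\,(y\,\nu\,z)$, with $\mu$ the root and $\nu$ the inner decoration. The canonical pairing is diagonal for this decomposition and changes sign between the two shapes: up to an overall scalar, $\langle L_{\mu,\nu},L_{f,g}\rangle=f(\mu)g(\nu)$, $\langle R_{\mu,\nu},R_{f,g}\rangle=-f(\mu)g(\nu)$, and $\langle L_{\mu,\nu},R_{f,g}\rangle=0$. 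The relative minus sign between the combs is the one prescribed by Koszul duality; it is exactly what makes $\mathrm{As}$ self-dual. To pair the dual generators against the combinations $\star,\leftarrow,\rightarrow,\uparrow,\downarrow$ occurring in $R_\Quad$, I expand these in the basis $\nwarrow,\swarrow,\searrow,\nearrow$: for instance the dual generator $\nwarrow$ pairs to $1$ with each of $\star$, $\leftarrow$, $\uparrow$ and to $0$ with $\rightarrow$, $\downarrow$, and similarly for the other three.

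Reading each chain $(1)^!,\dots,(9)^!$ as a list of equalities between single decorated trees, its consecutive differences yield $4+2+3+2+1+2+3+2+4=23$ elements of $\F(G_\Quad^*)_3$. Their linear independence is immediate and requires no computation: for each fixed root $\mu$ the four left combs $L_{\mu,\nu}$ and the four right combs $R_{\mu,\nu}$ each appear exactly once across the nine groups, so every one of the $32$ decorated trees lies in exactly one chain and the nine chains are vertex-disjoint paths covering all $32$ trees. The $23$ differences are then the edges of a forest on these $32$ vertices, hence linearly independent. It thus only remains to verify that each of the $23$ differences is orthogonal to each of the nine generators of $R_\Quad$; once this is done, the $23$ independent relations lie in the $23$-dimensional space $R_\Quad^{\perp}$ and therefore span it.

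This orthogonality check is the one genuine computation, and keeping the signs straight is the main obstacle. Writing a generator of $R_\Quad$ as $L_{a,b}-\sum_i R_{c,d_i}$ (the sum arising from expanding the inner combination into the basis) and pairing it with a difference $T-T'$ of dual trees, the left--right orthogonality kills all mixed terms, the $L$--$L$ contribution carries a $+$ sign and the $R$--$R$ contribution the compensating $-$ sign; in each of the $9\times 23$ cases these two contributions either both vanish or cancel. For example the generator of relation $(1,1)$, namely $L_{\nwarrow,\nwarrow}-(R_{\nwarrow,\nwarrow}+R_{\nwarrow,\swarrow}+R_{\nwarrow,\searrow}+R_{\nwarrow,\nearrow})$, pairs to $1+(-1)=0$ with the first difference $L_{\nwarrow,\nwarrow}-R_{\nwarrow,\nwarrow}$ of $(1)^!$ and to $0$ with every other listed difference; it is precisely the minus sign on the right comb that turns the right-hand side $y\star z$ of $(1,1)$ into the string of equalities $x\nwarrow(y\nwarrow z)=x\nwarrow(y\swarrow z)=\cdots$ of group $(1)^!$. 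Once the description is established the final remark is immediate: groups $(1)^!$, $(7)^!$, $(3)^!$ and $(9)^!$ respectively contain $(x\diamond y)\diamond z=x\diamond(y\diamond z)$ for $\diamond=\nwarrow,\swarrow,\nearrow,\searrow$, so the four products are associative.
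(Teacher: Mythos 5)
Your proposal is correct and follows essentially the same route as the paper: equip the arity-$3$ component with the signed pairing (positive on left combs, negative on right combs), note $\dim R_\Quad^{\perp}=32-9=23$, and exhibit the $23$ consecutive differences of the chains $(1)^!,\dots,(9)^!$ as a linearly independent subset of $R_\Quad^{\perp}$. Your forest argument for linear independence is a nice explicit justification of what the paper merely asserts, but the overall strategy is identical.
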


\begin{proof}  We put $G=Vect(\nwarrow,\swarrow,\searrow,\nearrow)$ and $E$ the component of arity $3$ of the free
nonsymmetric operad generated by $G$, that is to say:
$$E=Vect\left(\bdtroisdeux{$f$}{$g$},\bdtroisun{$f$}{$g$}\mid f,g \in \{\nwarrow,\swarrow,\searrow,\nearrow\}\right).$$
We give $G$ a pairing, such that the four products form an orthonormal basis of $G$. This induces a pairing on $E$: for all $x,y,z,t\in G$,
\begin{align*}
\langle \bdtroisun{$x$}{$y$},\bdtroisun{$z$}{$t$}\rangle&=\langle x,z\rangle \langle y,t\rangle,&
\langle \bdtroisdeux{$x$}{$y$},\bdtroisdeux{$z$}{$t$}\rangle&=-\langle x,z\rangle \langle y,t\rangle,\\
\langle \bdtroisdeux{$x$}{$y$},\bdtroisun{$z$}{$t$}\rangle&=0,&
\langle \bdtroisun{$x$}{$y$},\bdtroisdeux{$z$}{$t$}\rangle&=0.
\end{align*}
The quadratic nonsymmetric operad $\Quad$ is generated by $G=Vect(\nwarrow,\swarrow,\searrow,\nearrow)$ 
and the subspace of relations $R$ of $E$ corresponding to the nine relations (1,1)$\ldots$(3,3). 
The quadratic nonsymmetric operad $\Quad^!$ is generated by $G \approx G^*$ and the subspaces of relations $R^\perp$ of $E$.
As $dim(R)=9$ and $dim(E)=32$, $dim(R^\perp)=23$. A direct verification shows that the $23$ relations given in $(1)^!,\ldots, (9)^!$
are elements of $R^\perp$. As they are linearly independent, they form a basis of $R^\perp$.  \end{proof} \\

{\bf Notations}. We consider:
$$\R=\bigsqcup_{n=1}^\infty [n]^2.$$
The element $(i,j) \in [n]^2 \subset \R$ will be denoted by $(i,j)_n$ in order to avoid the confusions.
We graphically represent $(i,j)_n$ by putting in grey the boxes of coordinates $(a,b)$, $1\leq a \leq i$, $1 \leq b \leq j$,
 of a $n \times n$ array, the boxes $(1,1)$, $(1,n)$, $(n,1)$ and $(n,n)$ being respectively up left, down left, up right and down right.
For example:
\begin{align*}
(2,1)_3&=\blocun, &(1,1)_2&=\blocdeux, &(3,2)_4&=\bloctrois.
\end{align*}

\begin{prop}
Let $A_\R=Vect(\R)$. We define four products $\nwarrow$, $\swarrow$, $\searrow$, $\nearrow$ on $A_\R$ by:
\begin{align*}
(i,j)_p \nwarrow (k,l)_q&=(i,j)_{p+q},&(i,j)_p \nearrow (k,l)_q&=(k+p,j)_{p+q},\\
(i,j)_p \swarrow (k,l)_q&=(i,p+l)_{p+q},&
(i,j)_p \searrow (k,l)_q&=(k+p,l+p)_{p+q}.
\end{align*}
Then $(A_\R,\nwarrow,\swarrow,\searrow,\nearrow)$ is a dual quadri-algebra.
It is graded by putting the elements of $[n]^2 \in \R$ homogeneous of degree $n$, and the generating formal series of $A_\R$ is:
$$\sum_{n=1}^\infty n^2X^n=\frac{X(1+X)}{(1-X)^3}.$$
Moreover, $A_\R$ is freely generated as a dual quadri-algebra by $(1,1)_1$.
\end{prop}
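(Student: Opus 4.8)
The plan is to prove the three assertions in turn, the first two being direct computations and the third the substantive one. To check that $(A_\R,\nwarrow,\swarrow,\searrow,\nearrow)$ is a dual quadri-algebra, I would substitute the four product formulas into each of the nine groups of relations $(1)^!,\ldots,(9)^!$ of the previous proposition. The verification is mechanical once one records three structural features of the products: (i) for any $\diamond\in\{\nwarrow,\swarrow,\searrow,\nearrow\}$ the size of $(i,j)_p\diamond(k,l)_q$ is $p+q$, so in every relation both sides automatically land in $[p+q+s]^2$ and only the two coordinates must be compared; (ii) $\nwarrow$ discards the coordinates of its right argument while $\searrow$ discards those of its left argument (up to the uniform shift by the left size); (iii) in $\nearrow$ the first coordinate comes from the right argument (shifted) and the second from the left, and in $\swarrow$ it is the reverse. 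With (i)--(iii) each equality collapses to an identity between two integers. For instance, every left-hand product in $(9)^!$ equals $(m+p+q,\,r+p+q)_{p+q+s}$ independently of $\diamond$ because $\searrow$ forgets the left coordinates, and this matches $x\searrow(y\searrow z)$; dually, $(1)^!$ holds because $\nwarrow$ forgets the right coordinates, so that $x\nwarrow(y\diamond z)=(i,j)_{p+q+s}$ for all four $\diamond$. The remaining groups go through identically.

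The grading and the generating series are then immediate: by (i) the four products are additive for the degree, $[n]^2$ is exactly the homogeneous component of degree $n$, and $|[n]^2|=n^2$; the closed form $\sum_{n\ge 1}n^2X^n=X(1+X)/(1-X)^3$ is the standard identity. For freeness I would argue by a morphism together with a dimension count. Write $g=(1,1)_1$ and let $F$ be the free dual quadri-algebra on one generator; by the universal property there is a unique morphism of dual quadri-algebras $\Phi\colon F\to A_\R$ sending the generator to $g$. Grading $F$ so that its generator is homogeneous of degree $1$, the map $\Phi$ is homogeneous, and $\dim F_n=\dim\Quad^!(n)=n^2=\dim (A_\R)_n$ by Proposition~\ref{2}. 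Hence it suffices to prove that $\Phi$ is surjective, that is, that every $(i,j)_n$ lies in the sub-dual-quadri-algebra generated by $g$.

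The surjectivity, which is the heart of the argument, I would establish by an explicit construction. Writing $g^{\diamond m}$ for the $\diamond$-product of $m$ copies of $g$ (well defined, since the four products are associative), a one-step induction gives the diagonal and the two edges:
\[
g^{\searrow m}=(m,m)_m,\qquad g^{\swarrow m}=(1,m)_m,\qquad g^{\nearrow m}=(m,1)_m .
\]
Then, using $(a,b)_p\swarrow(c,d)_q=(a,p+d)_{p+q}$ and $(a,b)_p\nearrow(c,d)_q=(c+p,b)_{p+q}$, one produces any corner configuration of minimal size:
\[
(i,i)_i\swarrow(1,j-i)_{j-i}=(i,j)_j \ \text{ for } i\le j, \qquad (j,j)_j\nearrow(i-j,1)_{i-j}=(i,j)_i \ \text{ for } i\ge j,
\]
and finally one pads the size up to $n$ by applying $\nwarrow g$ repeatedly, since $(i,j)_m\nwarrow g=(i,j)_{m+1}$. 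This exhibits every $(i,j)_n$ as a product of copies of $g$, so $\Phi$ is surjective; combined with the equality of dimensions in each degree it is an isomorphism, and $A_\R$ is free on $(1,1)_1$.

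The long part is the relation check in the first step, but it is entirely routine; the genuine content is the explicit generation in the last step, namely recognizing that the diagonal and edge elements $g^{\searrow m},g^{\swarrow m},g^{\nearrow m}$ together with the padding operation $\,\cdot\nwarrow g\,$ already reach every $(i,j)_n$. Once surjectivity is secured, the numerical coincidence $\dim\Quad^!(n)=n^2=|[n]^2|$ from Proposition~\ref{2} furnishes the isomorphism for free, which is why the dimension-count strategy is preferable here to constructing an inverse morphism by hand.
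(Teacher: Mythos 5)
Your proposal is correct and follows essentially the same route as the paper: verify the nine groups of relations $(1)^!,\ldots,(9)^!$ directly, show that $(1,1)_1$ generates $A_\R$, and conclude freeness from the surjective morphism out of the free dual quadri-algebra on one generator together with the dimension count $\dim\Quad^!(n)=n^2$ from Proposition \ref{2}. The only difference is cosmetic: where the paper proves generation by induction on $n$ (treating the cases $i\geq 2$, $j\geq 2$, and the two corner elements $(1,1)_n$, $(n,n)_n$ separately), you give a closed-form factorization of $(i,j)_n$ through the diagonal and edge elements followed by padding with $\nwarrow(1,1)_1$, which is an equally valid and slightly more explicit version of the same step.
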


\begin{proof}
Let us take $(i,j)_p$, $(k,l)_q$ and $(m,n)_r \in \R$. Then:
\begin{itemize}
\item Each computation in $(1)^!$ gives $(i,j)_{p+q+r}$.
\item Each computation in $(2)^!$ gives $(p+k,j)_{p+q+r}$.
\item Each computation in $(3)^!$ gives $(p+q+m,j)_{p+q+r}$.
\item Each computation in $(4)^!$ gives $(i,p+l)_{p+q+r}$.
\item Each computation in $(5)^!$ gives $(p+k,p+l)_{p+q+r}$.
\item Each computation in $(6)^!$ gives $(p+q+m,p+l)_{p+q+r}$.
\item Each computation in $(7)^!$ gives $(i,p+q+n)_{p+q+r}$.
\item Each computation in $(8)^!$ gives $(p+k,p+q+n)_{p+q+r}$.
\item Each computation in $(9)^!$ gives $(p+q+m,p+q+n)_{p+q+r}$.
\end{itemize}
So $A_\R$ is a dual quadri-algebra. We now prove that $A_\R$ is generated by $(1,1)_1$.
Let $B$ be the dual quadri-subalgebra of $A_\R$ generated by $(1,1)_1$, and let us prove that $(i,j)_n \in B$
by induction on $n$ for all $(i,j)_n \in \R$. This is obvious in $n=1$, as then $(i,j)_n=(1,1)_1$. Let us assume the result at rank $n-1$,
with $n>1$.
\begin{itemize}
\item If $i \geq 2$ and $j\leq n-1$, then $(1,1)_1 \nearrow (i-1,j)_{n-1}=(i,j)_n$. By the induction hypothesis, $(i-1,j)_{n-1}\in B$,
so $(i,j)_n \in B$.
\item If $i\leq n-1$ and $j\geq 2$, then $(1,1)_1 \swarrow (i,j-1)_{n-1}=(i,j)_n$.  By the induction hypothesis, $(i,j-1)_{n-1}\in B$,
so $(i,j)_n \in B$.
\item Otherwise,  ($i=1$ or $j=n$) and ($i=n$ or $j=1$), that is to say $(i,j)_n=(1,1)_n$ or $(i,j)_n=(n,n)_n$.
We remark that $(1,1)\nwarrow (1,1)_{n-1}=(1,1)_n$ and $(1,1)_1 \searrow (n-1,n-1)_{n-1}=(n,n)_n$.
By the induction hypothesis, $(1,1)_{n-1}$ and $(n-1,n-1)_n \in B$, so $(1,1)_n $ and $(n,n)_n \in B$.
\end{itemize}
Finally, $B$ contains $\R$, so $B=A_\R$. \\

Let $C$ be the free $\Quad^!$-algebra generated by a single element $x$, homogeneous of degree $1$. As a graded vector space:
$$C=\bigoplus_{n\geq 1} \Quad^!_n\otimes V^{\otimes n},$$
where $V=Vect(x)$. So for all $n \geq 1$, by Proposition \ref{2}, $dim(C_n)=n^2=dim(A_n)$. There exists a surjective morphism of $\Quad^!$-algebras
$\theta$ from $C$ to $A$, sending $x$ to $(1,1)_1$. As $x$ and $(1,1)_1$ are both homogeneous of degree $1$,
$\theta$ is homogeneous of degree $0$. As $A$ and $C$ have the same generating formal series, $\theta$ is bijective, so $A$ is isomorphic to $C$. \end{proof}\\

{\bf Examples.} Here are graphical examples of products. The result of the product is drawn in light gray:
\begin{align*}
\blocun \nwarrow \blocdeux&=\produitnw,&\blocun \swarrow \blocdeux&=\produitsw,&
\blocun \searrow \blocdeux&=\produitse,&\blocun \nearrow \blocdeux&=\produitne.
\end{align*}
Roughly speaking, the products of $x \in [m]^2\subset \R$ and $y \in [n]^2\subset \R$ 
are obtained by putting $x$ and $y$ diagonally in a common array of size $(m+n) \times (m+n)$.
This array is naturally decomposed in four parts denoted by $nw$, $sw$, $se$ and $ne$ according to their direction. Then:
\begin{enumerate}
\item $x \nwarrow y$ is given by the black boxes in the $nw$ part.
\item $x\swarrow y$ is given by the boxes in the $sw$ part which are simultaneously under a black box and to the left of a black box.
\item $x\searrow y$ is given by the black boxes in the $se$ part.
\item $x\nearrow y$ is given by the boxes in the $ne$ part which are simultaneously over a black box and to the right of a black box.
\end{enumerate}
Here are the results of the nine relations applied to $x=\blocun$, $y=\blocdeux$ and $z=\bloctrois$:
\begin{align*}
(1)^!&:\relationun&(2)^!&:\relationdeux&(3)^!&:\relationtrois\\[2mm]
(4)^!&:\relationquatre&(5)^!&:\relationcinq&(6)^!&:\relationsix\\[2mm]
(7)^!&:\relationsept&(8)^!&:\relationhuit&(9)^!&:\relationneuf
\end{align*}

{\bf Remarks.} \begin{enumerate}
\item A description of the free $\Quad^!$-algebra generated by any set $\D$ is done similarly. We put:
$$\R(\D)=\bigsqcup_{n=1}^\infty [n]^2\times \D^n.$$
The four products are defined by:
\begin{align*}
((i,j)_p,d_1,\ldots,d_p) \nwarrow ((k,l)_q,e_1,\ldots,e_q)&=((i,j)_{p+q},d_1,\ldots,d_p,e_1,\ldots,e_q),\\
((i,j)_p,d_1,\ldots,d_p)  \swarrow ((k,l)_q,e_1,\ldots,e_q)&=((i,p+l)_{p+q}d_1,\ldots,d_p,e_1,\ldots,e_q),\\
((i,j)_p,d_1,\ldots,d_p)  \searrow ((k,l)_q,e_1,\ldots,e_q)&=((k+p,l+p)_{p+q}d_1,\ldots,d_p,e_1,\ldots,e_q)\\
((i,j)_p,d_1,\ldots,d_p)  \nearrow ((k,l)_q,e_1,\ldots,e_q)&=((k+p,j)_{p+q}d_1,\ldots,d_p,e_1,\ldots,e_q).
\end{align*}
\item We can also deduce a combinatorial description of the nonsymmetric operad $\Quad^!$. As a vector space, $\Quad^!_n=Vect([n]^2)$
for all $n \geq 1$. The composition is given by:
$$(i,j)_m \circ ((k_1,l_1)_{n_1},\ldots,(k_n,l_n)_{n_m})=(n_1+\ldots+n_{i-1}+k_i,n_1+\ldots+n_{j-1}+l_j)_{n_1+\ldots+n_m}.$$
In particular:
\begin{align*}
\nwarrow&=(1,1)_2,&\swarrow&=(1,2)_2,&\searrow&=(2,2)_2,&\nearrow&=(2,1)_2.
\end{align*}\end{enumerate}

\begin{cor}\label{5}
We define a nonsymmetric operad $\Dias$ in the following way:
\begin{itemize}
\item For all $n\geq 1$, $\Dias_n=Vect([n])$. The elements of $[n] \subseteq \Dias_n$ are denoted by $(1)_n,\ldots,(n)_n$ in order to avoid confusions.
\item The composition is given by:
$$(i)_m \circ ((j_1)_{n_1},\ldots,(j_m)_{n_m})=(n_1+\ldots+n_{i-1}+j_i) _{n_1+\ldots+n_m}.$$
\end{itemize}
This is the nonsymmetric operad of associative dialgebras \cite{Loday}, that is to say algebras $A$ with two products $\vdash$ and $\dashv$ such that
for all $x,y,z\in A$:
\begin{align*}
x\dashv(y\dashv z)&=x\dashv (y\vdash z)=(x\dashv y)\dashv z,\\
(x\vdash y)\dashv z&=x\vdash (y\dashv z),\\
(x\dashv y)\vdash z&=(x\vdash y)\vdash z=x\vdash (y\vdash z).
\end{align*}
We denote by $\Box$ and $\blacksquare$ the two Manin products on nonsymmetric-operads of \cite{Vallette}.
Then:
\begin{align*}
\Quad^!&=\Dias\otimes \Dias=\Dias \Box \Dias=\Dias\blacksquare \Dias,\\
\Quad&=\Dend \blacksquare \Dend=\Dend \Box \Dend.
\end{align*}
 \end{cor}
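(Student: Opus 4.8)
The plan is to derive all five operad identities from four inputs: the explicit description of $\Quad^!$ just obtained, the identity $\Quad=\Dend\blacksquare\Dend$ from \cite{Vallette}, the classical Koszul duality $\Dend^!=\Dias$ of \cite{Loday}, and the formal properties of the Manin products recorded in \cite{Vallette}. First I would dispose of the combinatorial claim about $\Dias$: putting $\dashv=(1)_2$ and $\vdash=(2)_2$, the composition rule reduces to the five dialgebra relations, and since $\dim(\Dias_n)=n$ agrees with Loday's count, the operad defined here is the operad of associative dialgebras (so also $\Dias^!=\Dend$). I would then isolate the two facts from \cite{Vallette} that do the real work: Koszul duality exchanges the two products, $(\P\blacksquare\Q)^!=\P^!\Box\Q^!$ and $(\P\Box\Q)^!=\P^!\blacksquare\Q^!$, and the white product $\P\Box\Q$ is the sub-operad of the Hadamard product $\P\otimes\Q$ generated in arity $2$.

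The identities on the white and Hadamard side then follow formally. The equality $\Quad^!=\Dias\otimes\Dias$ is purely combinatorial: in arity $n$ the Hadamard product is $\Dias_n\otimes\Dias_n\cong Vect([n]^2)$ with coordinatewise composition, which is exactly the composition of $\Quad^!$ given in the Remark (the first coordinate applying the $\Dias$-rule to the indices $i,k$, the second to $j,l$). Dualizing the given identity yields $\Quad^!=(\Dend\blacksquare\Dend)^!=\Dend^!\Box\Dend^!=\Dias\Box\Dias$. Finally, $\Dias\Box\Dias$ is the arity-$2$-generated sub-operad of $\Dias\otimes\Dias$, and the previous proposition shows that $\Quad^!=\Dias\otimes\Dias$ is generated in arity $2$ (the free $\Quad^!$-algebra on one generator being generated by $(1,1)_1$); hence $\Dias\Box\Dias=\Dias\otimes\Dias$. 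This establishes $\Quad^!=\Dias\otimes\Dias=\Dias\Box\Dias$, and taking Koszul duals returns the input $\Quad=\Dend\blacksquare\Dend$.

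The one genuinely new statement, and the step I expect to be the main obstacle, is $\Quad=\Dend\Box\Dend$ (equivalently, after Koszul duality, $\Quad^!=\Dias\blacksquare\Dias$): it asserts that the black and white Manin products of $\Dend$ coincide. Here the trick of the previous paragraph is unavailable, since $\Dend\otimes\Dend$ is \emph{not} generated in arity $2$ — already $\dim(\Dend\otimes\Dend)_3=\dim(\Dend_3)^2=25>23=\dim(\Quad_3)$ — so $\Dend\Box\Dend$ is a proper sub-operad of $\Dend\otimes\Dend$. Because both products are quadratic with the same space of generators $G_\Dend\otimes G_\Dend$, each is determined by its space of ternary relations inside $\F(G_\Dend\otimes G_\Dend)_3$, so it suffices to prove these two relation spaces coincide. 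For the black product this space is, by the input $\Quad=\Dend\blacksquare\Dend$, the span $R_\Quad$ of the nine quadri-relations; hence the whole corollary comes down to the concrete linear-algebra verification that the white Manin product of $\Dend$ cuts out precisely the same span $R_\Quad$. I would compute the white-product relations explicitly from the definition in \cite{Vallette} (applied to the three dendriform relations in each tensor factor) and check that their span equals $R_\Quad$; the canonical comparison morphism of \cite{Vallette} between the two products, being the identity on generators, already relates the two relation spaces, so only their coincidence remains, which can be confirmed by matching dimensions via the known value $\dim(\Quad^!_n)=n^2$ of Proposition \ref{2}. This finite but delicate check is the heart of the matter; once it is done, $\Quad=\Dend\Box\Dend$ follows, dualizing gives $\Quad^!=\Dias\blacksquare\Dias$, and all the stated identities are assembled. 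The delicate part is precisely this control of the black/white relation spaces; everything else is Koszul-duality bookkeeping and the coordinatewise composition rule.
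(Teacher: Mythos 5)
Your proposal follows the paper's strategy almost exactly: identify $\Dias$ with Loday's dialgebra operad via a morphism from the presented operad together with the dimension count $\dim\Dias_n=n$; observe $\Quad^!=\Dias\otimes\Dias$ directly from the composition formula; take one Manin identity ($\Quad=\Dend\blacksquare\Dend$, equivalently $\Quad^!=\Dias\Box\Dias$) from \cite{Vallette}; reduce the genuinely new identity to a finite comparison of quadratic relation spaces; and finish by Koszul duality. The only real divergence is where you place that finite check. The paper computes the \emph{black} product $\Dias\blacksquare\Dias$: its relation space is by definition spanned by the $25$ explicit elements indexed by pairs of dialgebra relations, and the proof simply regroups these and matches them against the $23$ relations $(1)^!,\ldots,(9)^!$ of $\Quad^!$. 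You instead propose the Koszul-dual computation, showing the \emph{white} product $\Dend\Box\Dend$ has relation space exactly $R_\Quad$; this is equally finite but requires computing a preimage/intersection inside $\F(G_\Dend)_3\otimes\F(G_\Dend)_3$ (equivalently, the rank of the $32$ generator compositions in the $25$-dimensional $(\Dend\otimes\Dend)_3$) rather than merely listing a spanning set, so the paper's choice of side is the computationally lighter one. One small caution: the value $\dim\Quad^!(3)=9$ from Proposition \ref{2} only tells you what dimension to expect; the needed inequality $\dim R_{\Dend\Box\Dend}\leq 9$ cannot be extracted from that formula without circularity and must come from the explicit verification you describe, exactly as the paper's regrouping of $25$ relations into $23$ independent ones does on the dual side.
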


\begin{proof}  We denote by $\Dias'$ the nonsymmetric operad generated by $\dashv$ and $\vdash$ and the relations:
\begin{align*}
\bdtroisdeux{$\dashv$}{$\dashv$}&=\bdtroisdeux{$\dashv$}{$\vdash$}=\bdtroisun{$\dashv$}{$\dashv$},&
\bdtroisdeux{$\vdash$}{$\dashv$}&=\bdtroisun{$\dashv$}{$\vdash$},&
\bdtroisdeux{$\vdash$}{$\vdash$}&=\bdtroisun{$\vdash$}{$\dashv$}=\bdtroisun{$\vdash$}{$\vdash$}.
\end{align*}
First, observe that:
\begin{align*}
(1)_2\circ (I,(1)_2)&=(1)_2\circ(I,(2)_2)=(1)_2\circ((1)_2,I)=(1)_3,\\
(1)_2\circ((2)_2,I)&=(2)_2\circ(I,(1)_2)=(2)_3,\\
(2)_2\circ (I,(2)_2)&=(2)_2\circ((1)_2,I)=(2)_2\circ ((2)_2,I)=(3)_3.
\end{align*}
So there exists a morphism $\theta$ of nonsymmetric operad from $\Dias'$ to $\Dias$, sending $\dashv$ to $(1)_2$ and $\vdash$ to $(2)_2$.
Note that $\theta(I)=(1)_1$.\\

Let us prove that $\theta$ is surjective. Let $n\geq 1$, $i\in [n]$, we show that $(i)_n \in Im(\theta)$ by induction on $n$.
If $n \leq 2$, the result is obvious. Let us assume the result at rank $n-1$, $n\geq 3$. If $i=1$, then:
$$(1)_2\circ ((1)_1,(1)_{n-1})=(1)_n.$$
By the induction hypothesis, $(1)_{n-1} \in Im(\theta)$, so $(1)_n \in Im(\theta)$. If $i\geq 2$, then:
$$(2)_2\circ ((1)_1,(i-1)_{n-1})=(i)_n.$$
By the induction hypothesis, $(1)_{n-1} \in Im(\theta)$, so $(i)_n \in Im(\theta)$. \\

It is proved in \cite{Loday} that $dim(\Dias'_n)=dim(\Dias_n)=n$ for all $n \geq 1$. As $\theta$ is surjective, it is an isomorphism.
Moreover, let us consider the following map:
$$\left\{\begin{array}{rcl}
\Dias\otimes \Dias&\longrightarrow&\Quad^!\\
(i)_n \otimes (j)_n&\longrightarrow&(i,j)_n.
\end{array}\right.$$
It is clearly an isomorphism of nonsymmetric operads. It is proved in \cite{Vallette} that $\Dias \Box \Dias=\Quad^!$.
As $R_\Dias$ is generated the quadratic nonsymmetric algebra generated by $(1)_2$ and $(2)_2$ and the following relations:
$$\bdtroisun{$b$}{$a$}-\bdtroisdeux{$c$}{$d$}, (a,b,c,d)\in E=
\left\{\begin{array}{c}
((1)_2,(1)_2,(1)_2,(1)_2),((1)_2,(1)_2,(1)_2,(2)_2),\\
((2)_2,(1)_2,(2)_2,(1)_2),((1)_2,(2)_2,(2)_2,(2)_2),\\
((2)_2,(2)_2,(2)_2,(2)_2)\end{array}\right\},$$
$\Dias \blacksquare \Dias$ is generated by $(1,1)_2$, $(1,2)_2$, $(2,1)_2$ and $(2,2)_2$ with the relations:
\begin{align*}
&\bdtroisun{$b$}{$a$}-\bdtroisdeux{$c$}{$d$},(a,b,c,d)\in E',\\
E'&=\{((a_1,a_2)_2,(b_1,b_2)_2,(c_1,c_2)_2,(d_1,d_2)_2)\mid (a_1,b_1,c_1,d_1),(a_2,b_2,c_2,d_2)\in E\}.
\end{align*}
This gives 25 relations, which are not linearly independent, and can be regrouped in the following way:
\begin{align*}
\bprimedtroisun{$11$}{$11$}&=\bdtroisdeux{$11$}{$11$}=\bdtroisdeux{$11$}{$12$}=\bdtroisdeux{$11$}{$21$}=\bdtroisdeux{$11$}{$22$},&
\bprimedtroisun{$11$}{$21$}&=\bdtroisdeux{$21$}{$11$}=\bdtroisdeux{$21$}{$12$},\\
\bprimedtroisun{$21$}{$11$}&=\bdtroisdeux{$21$}{$21$}=\bprimedtroisun{$21$}{$21$}=\bdtroisdeux{$21$}{$22$},&
\bprimedtroisun{$11$}{$12$}&=\bdtroisdeux{$12$}{$21$}=\bdtroisdeux{$12$}{$11$},\\
\bprimedtroisun{$11$}{$22$}&=\bdtroisdeux{$22$}{$11$},&
\bprimedtroisun{$21$}{$12$}&=\bprimedtroisun{$21$}{$22$}=\bdtroisdeux{$22$}{$21$},\\
\bprimedtroisun{$12$}{$11$}&=\bdtroisdeux{$12$}{$12$}=\bdtroisdeux{$12$}{$22$}=\bprimedtroisun{$12$}{$12$},&
\bprimedtroisun{$12$}{$21$}&=\bdtroisdeux{$22$}{$12$}=\bprimedtroisun{$12$}{$22$},\\
\bdtroisdeux{$22$}{$22$}&=\bprimedtroisun{$22$}{$11$}=\bprimedtroisun{$22$}{$12$}=\bprimedtroisun{$22$}{$21$}=\bprimedtroisun{$22$}{$22$}.
\end{align*}
where we denote $ij$ instead of $(i,j)_2$. So $\Dias \blacksquare \Dias$ is isomorphic to $\Quad^!$ via the isomorphism given by:
$$\left\{\begin{array}{rcl}
\Quad^!&\longrightarrow&\Dias \blacksquare \Dias\\
\nwarrow&\longrightarrow&(1,1)_2,\\
\swarrow&\longrightarrow&(1,2)_2,\\
\searrow&\longrightarrow&(2,2)_2,\\
\nearrow&\longrightarrow&(2,1)_2.
\end{array}\right.$$
By Koszul duality, as $\Dias^!=\Dend$, we obtain the results for $\Quad$. \end{proof}

\subsection{Free quadri-algebra on one generator}

As $\Quad=\Dend \Box \Dend$, $\Quad$ is the suboperad of $\Dend \otimes \Dend$ generated by the component of arity $2$.
An explicit injection of $\Quad$ into $\Dend \otimes \Dend$ is given by:

\begin{prop}\label{6}
The following defines a injective morphism of nonsymmetric operads:
$$\Theta:\left\{\begin{array}{rcl}
\Quad&\longrightarrow&\Dend \otimes \Dend\\
\nwarrow&\longrightarrow&\prec \otimes \prec\\
\swarrow&\longrightarrow&\prec \otimes \succ\\
\searrow&\longrightarrow&\succ \otimes \succ\\
\nearrow&\longrightarrow&\succ \otimes \prec.
\end{array}\right.$$
\end{prop}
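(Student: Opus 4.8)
The plan is to combine the universal property of quadratic nonsymmetric operads with Corollary \ref{5}. Since $\Quad$ is quadratic, generated by $G_\Quad=Vect(\nwarrow,\swarrow,\searrow,\nearrow)$ with relation space $R_\Quad$, the generator assignment defining $\Theta$ extends to a morphism of nonsymmetric operads $\Quad\longrightarrow \Dend\otimes \Dend$ as soon as the nine generators of $R_\Quad$ are sent to $0$ in $\Dend\otimes\Dend$. So the first task is to compute $\Theta$ on each relation and check that it vanishes. For this I would use that composition in the tensor product operad is componentwise: for $a\otimes b,\,c\otimes d\in (\Dend\otimes\Dend)_2$ one has $(a\otimes b)\circ_i(c\otimes d)=(a\circ_i c)\otimes(b\circ_i d)$, so $\Theta$ commutes with grafting and simply tensors the two dendriform labels while preserving the shape (left or right comb) of the arity-$3$ tree.

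It is convenient to record the images of the derived operations. From the definition of $\Theta$ one obtains $\Theta(\leftarrow)=\prec\otimes\star$, $\Theta(\rightarrow)=\succ\otimes\star$, $\Theta(\uparrow)=\star\otimes\prec$, $\Theta(\downarrow)=\star\otimes\succ$ and $\Theta(\star)=\star\otimes\star$, where $\star=\prec+\succ$ in $\Dend$. The key observation, which makes the nine verifications uniform, is that each relation of $R_\Quad$ is sent by $\Theta$ to a difference $u_1\otimes u_2-v_1\otimes v_2$ in which, \emph{separately in each tensor slot}, the pairs $(u_1,v_1)$ and $(u_2,v_2)$ each differ by one of the three defining relations of $R_\Dend$; hence $u_1=v_1$ and $u_2=v_2$ already hold in $\Dend$ and the image vanishes. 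For instance, the relation $(1,2)$, namely $\bdtroisun{$\nwarrow$}{$\nearrow$}-\bdtroisdeux{$\nearrow$}{$\leftarrow$}$, has its two terms sent to
\begin{align*}
\bdtroisun{$\nwarrow$}{$\nearrow$}&\longmapsto\bdtroisun{$\prec$}{$\succ$}\otimes\bdtroisun{$\prec$}{$\prec$},\\
\bdtroisdeux{$\nearrow$}{$\leftarrow$}&\longmapsto\bdtroisdeux{$\succ$}{$\prec$}\otimes\bdtroisdeux{$\prec$}{$\star$}.
\end{align*}
In the first slot $\bdtroisun{$\prec$}{$\succ$}=\bdtroisdeux{$\succ$}{$\prec$}$ is the second relation of $R_\Dend$, and in the second slot $\bdtroisun{$\prec$}{$\prec$}=\bdtroisdeux{$\prec$}{$\star$}$ is the first; so the two images coincide in $\Dend\otimes\Dend$. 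The remaining eight relations are treated identically, each slot collapsing through one of the three relations of $R_\Dend$ (the two slots may use different ones). This shows $\Theta$ is a well-defined morphism of nonsymmetric operads.

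For injectivity I would invoke Corollary \ref{5} rather than argue by hand. Because $\Theta$ is a morphism and $\Quad$ is generated in arity $2$, its image $\Theta(\Quad)$ is the suboperad of $\Dend\otimes\Dend$ generated by $\Theta(G_\Quad)$. The four elements $\prec\otimes\prec,\ \prec\otimes\succ,\ \succ\otimes\succ,\ \succ\otimes\prec$ are linearly independent and span $\Dend_2\otimes\Dend_2=(\Dend\otimes\Dend)_2$, so $\Theta(G_\Quad)=G_\Dend\otimes G_\Dend$; the suboperad it generates is exactly the white Manin product $\Dend\Box\Dend$ of $\Dend\otimes\Dend$ recalled before the statement. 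Hence $\Theta$ is surjective onto $\Dend\Box\Dend$. By Corollary \ref{5} we have $\Quad=\Dend\Box\Dend$, so $\dim\Quad_n=\dim(\Dend\Box\Dend)_n$ for all $n$ (these dimensions being those of Proposition \ref{2}). A surjection of graded nonsymmetric operads between spaces of equal finite dimension in each arity is bijective, so $\Theta$ is injective, and in fact an isomorphism onto $\Dend\Box\Dend$.

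The main obstacle is bookkeeping rather than conceptual: one must handle the composition rule in the tensor product operad correctly, in particular the left/right comb conventions carried by $\bdtroisun{$f$}{$g$}$ and $\bdtroisdeux{$f$}{$g$}$ and the identification of which relation of $R_\Dend$ governs each slot, so that the factorization into two independent dendriform relations is valid for all nine relations at once. A secondary point is that the generator dictionary used by $\Theta$ must be compatible with the identification $\Quad\cong\Dend\Box\Dend$ of Corollary \ref{5}; this compatibility is precisely what the vanishing $\Theta(R_\Quad)=0$ certifies, which is why the relation check has to precede the dimension argument before the latter is allowed to conclude injectivity.
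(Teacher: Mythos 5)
Your proof is correct and follows essentially the same route as the paper, which leaves Proposition \ref{6} unproved and justifies it only by the sentence preceding it: $\Quad=\Dend\Box\Dend$ (Corollary \ref{5}) together with the realization of the white Manin product as the suboperad of the Hadamard product generated in arity $2$. Your explicit verification that the nine relations of $R_\Quad$ factor slot-wise through the three relations of $R_\Dend$ (and the dimension count via Proposition \ref{2}) supplies detail the paper omits, but it is the same argument.
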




\begin{cor} \label{7}
The quadri-subalgebra of $(\FQSym,\nwarrow,\swarrow,\searrow,\nearrow)$ generated by $(12)$ is free.
\end{cor}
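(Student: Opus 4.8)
The plan is to show that the canonical surjection of quadri-algebras $\Psi\colon F_\Quad(Kx)\to Q$, $x\mapsto(12)$, from the free quadri-algebra on one generator onto the sub-quadri-algebra $Q\subseteq\FQSym$ generated by $(12)$, is in fact an isomorphism. Since all four products are additive for the length grading and $(12)$ has degree $2$, the image $Q$ lives in even degrees and $Q_{2n}$ is spanned by the images of the arity-$n$ quadri-monomials in $(12)$; thus $\Psi$ is surjective and degree-doubling, and by Proposition \ref{2} we have $\dim F_\Quad(Kx)_n=\dim\Quad_n$. It therefore suffices to prove that $\Psi$ is injective, equivalently that $\dim Q_{2n}=\dim\Quad_n$ for all $n$.

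To produce the ambient free object, I would first use Proposition \ref{6}: the morphism $\Theta$ realizes $\Quad$ as the suboperad of $\Dend\otimes\Dend$ generated in arity $2$, so in any $\Dend\otimes\Dend$-algebra the sub-quadri-algebra generated by one element is a quotient of $F_\Quad(Kx)$, and is free precisely when those quadri-monomials remain independent. Let $D$ be the free dendriform algebra on one generator $g$. By the Hadamard (tensor) product of operads, $D\otimes D$ is naturally a $\Dend\otimes\Dend$-algebra, and applying an operation $\prec\otimes\prec,\ \prec\otimes\succ,\ \succ\otimes\succ,\ \succ\otimes\prec$ (and their higher-arity composites) to copies of $g\otimes g$ yields $p(g,\dots,g)\otimes q(g,\dots,g)$, which sweeps out all of $D_n\otimes D_n=(D\otimes D)_n$; since moreover $\dim(D\otimes D)_n=\dim(\Dend\otimes\Dend)_n$, the algebra $D\otimes D$ is the free $\Dend\otimes\Dend$-algebra on $g\otimes g$. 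Consequently its sub-quadri-algebra generated by $g\otimes g$ is exactly $F_\Quad(Kx)$, and the whole problem reduces to transporting the freeness of $g\otimes g$ in $D\otimes D$ to that of $(12)$ in $\FQSym$.

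The transport is where I would use that both dendriform structures of $\FQSym$ are free: by the dendriform rigidity theorem, $(\FQSym,\leftarrow,\rightarrow)$ and $(\FQSym,\uparrow,\downarrow)$ are free dendriform algebras, so one obtains dendriform morphisms $\pi_1,\pi_2\colon\FQSym\to D$ sending $(1)\mapsto g$ and every other dendriform generator to $0$. Noting that under $\Theta$ the horizontal structure $(\leftarrow,\rightarrow)$ corresponds to the first tensor factor and the vertical structure $(\uparrow,\downarrow)$ to the second, I would assemble $\pi_1$ and $\pi_2$, together with the deconcatenation coming from the dendriform co-structure of $\FQSym$, into a single quadri-algebra retraction $\Pi\colon\FQSym\to D\otimes D$ with $\Pi((12))=g\otimes g$. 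Such a $\Pi$ makes $\Pi\circ\Psi$ a quadri-morphism out of the free quadri-algebra $F_\Quad(Kx)$ sending $x$ to $g\otimes g$; by freeness it coincides with the canonical inclusion $F_\Quad(Kx)\hookrightarrow D\otimes D$, which is injective, whence $\Psi$ is injective and the proof concludes.

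The main obstacle is the construction and verification of this retraction $\Pi$: one must check that combining the two dendriform projections reproduces the tensor-factor structure of $D\otimes D$ on the nose, i.e. that the single element $(12)$ splits under the horizontal and vertical dendriform structures exactly as $g\otimes g$ does, and that the resulting map respects all four products $\nwarrow,\swarrow,\searrow,\nearrow$. Equivalently, the difficulty is the lower bound $\dim Q_{2n}\ge\dim\Quad_n$, namely showing that no unexpected linear relations appear among the quadri-monomials in $(12)$. If the retraction proves awkward to write down directly, an alternative is a purely enumerative route, computing the generating series of $Q$ and matching it with sequence A007297, but identifying and controlling the right permutation statistic is itself the crux there as well.
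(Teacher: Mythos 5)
Your overall architecture is the same as the paper's: realize $\Quad$ inside $\Dend\otimes\Dend$ via Proposition \ref{6}, observe that the sub-quadri-algebra generated by the generator of a free $\Dend\otimes\Dend$-algebra is the free quadri-algebra, and then transport freeness to $\FQSym$ by exhibiting a retraction. (One small imprecision along the way: $D\otimes D$ is not itself the free $\Dend\otimes\Dend$-algebra on $g\otimes g$ --- that free algebra is only the ``diagonal'' part $\bigoplus_n D_n\otimes D_n$, since $(\Dend\otimes\Dend)(n)=\Dend(n)\otimes\Dend(n)$ reaches nothing in $D_m\otimes D_n$ for $m\neq n$; but the conclusion you need, namely that the sub-quadri-algebra generated by $g\otimes g$ is free, survives.)

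The genuine gap is exactly the step you flag as ``the main obstacle'': the retraction $\Pi$. Two separate dendriform morphisms $\pi_1,\pi_2\colon\FQSym\to D$ cannot in general be ``assembled'' into a quadri-morphism $\Pi\colon\FQSym\to D\otimes D$: such a $\Pi$ must satisfy, e.g., $\Pi(a\nwarrow b)=\Pi(a)\,(\uparrow\otimes\leftarrow)\,\Pi(b)$, and this condition genuinely couples the two tensor factors, whereas $\pi_1$ and $\pi_2$ only record how $\FQSym$ interacts with each dendriform structure separately; there is no functorial recipe producing $\Pi$ from the pair $(\pi_1,\pi_2)$, and the vague appeal to ``the deconcatenation coming from the dendriform co-structure'' does not supply one. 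The paper's proof lives or dies on an explicit combinatorial construction here: the map $\psi\colon\FQSym_{even}\to\FQSym\otimes\FQSym$ sending $\sigma\in\S_{2n}$ to the pair obtained by halving its first $n$ (odd) values and its last $n$ (even) values when the odd values occupy exactly the first $n$ positions, and to $0$ otherwise. One then checks by a shuffle argument that $\psi$ intertwines $(\nwarrow,\swarrow,\searrow,\nearrow)$ on $\FQSym$ with $(\uparrow\otimes\leftarrow,\downarrow\otimes\leftarrow,\downarrow\otimes\rightarrow,\uparrow\otimes\rightarrow)$ on $\FQSym\otimes\FQSym$ and satisfies $\psi((12))=(1)\otimes(1)$, which gives $\psi\circ\phi=\mathrm{Id}$ on the free subalgebra and hence the injectivity you need. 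Without producing this map (or an equivalent lower bound $\dim Q_{2n}\geq\dim\Quad(n)$, which you also leave open in your enumerative fallback), the proof is not complete.
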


\begin{proof} Both dendriform algebras $(\FQSym,\downarrow,\uparrow)$ and $(\FQSym,\leftarrow,\rightarrow)$ are free.
So the $\Dend \otimes \Dend$-algebra $(\FQSym \otimes \FQSym,\uparrow \otimes \leftarrow,\downarrow\otimes \leftarrow,
\downarrow \otimes \rightarrow,\uparrow \otimes \rightarrow)$ is free. By restriction, the $\Dend \otimes \Dend$-subalgebra of $\FQSym \otimes \FQSym$
generated by $(1)\otimes (1)$ is free. By restriction, the quadri-subalgebra $A$ of $\FQSym\otimes \FQSym$ generated by $(1)\otimes (1)$ is free.

Let $B$ be the quadri-subalgebra of $\FQSym$ generated by $(12)$ and let 
$\phi:A\longrightarrow B$  be the unique morphism sending $(1)\otimes (1)$ to $(12)$. 
We denote by $\FQSym_{even}$ the subspace of $\FQSym$ formed by the homogeneous components of even degrees. It is clearly a quadri-subalgebra of
$\FQSym$. As $(12)\in \FQSym_{even}$, $A\subseteq \FQSym_{even}$. We consider the map:
$$\psi:\left\{\begin{array}{rcl}
\FQSym_{even}&\longrightarrow&\FQSym\otimes \FQSym\\
\sigma\in \S_{2n}&\longrightarrow&\begin{cases}
\left(\frac{\sigma(1)-1}{2},\ldots,\frac{\sigma(n)-1}{2}\right)\otimes \left(\frac{\sigma(n+1)}{2},\ldots,\frac{\sigma(2n)}{2}\right)\\
\hspace{1cm} \mbox{ if $\sigma(1),\ldots,\sigma(n) $ are odd and $\sigma(n+1),\ldots,\sigma(2n) $ are even},\\
0\mbox{ otherwise}.
\end{cases}
\end{array}\right.$$
Let $\sigma \in \S_{2m}$, $\tau\in \S_{2n}$. Let us prove that $\psi(\sigma\diamond \tau)=\psi(\sigma)\diamond \psi(\tau)$
for $\diamond \in \{\nwarrow,\swarrow,\searrow,\nearrow\}$. 

{\it First case.} Let us assume that $\psi(\sigma)=0$. There exists $1\leq i\leq m$, such that $\sigma(i)$ is even,
and an element $m+1\leq j \leq m+n$, such that $\sigma(j)$ is odd.
 Let $\tau\in \S_{2n}$. Let $\alpha$ be obtained by a shuffle of $\sigma$ and $\tau[2n]$.
If the letter $\sigma(i)$ appears in $\alpha$ in one of the position $1,\ldots,m+n$, then $\psi(\alpha)=0$. Otherwise, the letter $\sigma(i)$ appears 
in one of the positions $m+n+1,\ldots,2m+2n$, so $\sigma(j)$ also appears in one of these positions, as $i<j$, and $\psi(\alpha)=0$. 
In both case, $\psi(\alpha)=0$, and we deduce that $\psi(\sigma\diamond \tau)=0=\psi(\sigma)\diamond \psi(\tau)$.

{\it Second case.} Let us assume that $\psi(\tau)=0$. By a similar argument, we show that $\psi(\sigma\diamond \tau)=0=\psi(\sigma)\diamond \psi(\tau)$.

{\it Last case.} Let us assume that $\psi(\sigma)\neq 0$ and $\psi(\tau)\neq 0$. We put $\sigma=(\sigma_1,\sigma_2)$ and 
$\tau=(\tau_1,\tau_2)$, where the letters of $\sigma_1$ and $\tau_1$ are odd and the letters of $\sigma_2$ and $\tau_2$ are even.
 Then $\psi(\sigma \nwarrow \tau)$ is obtained by shuffling $\sigma$ and $\tau[2n]$, such that the first and last letters are letters of $\sigma$,
 and keeping only permutations such that the $(m+n)$ first letters are odd (and the $(m+n)$ last letters are even). These words are obtained
 by shuffling $\sigma_1$ and $\tau_1[2m]$ such that the first letter is a letter of $\sigma_1$, and by shuffling $\sigma_2$ and $\tau_2[2m]$,
 such that the last letter is a letter of $\sigma_2$. Hence:
 $$\psi(\sigma \nwarrow \tau) =\psi(\sigma) \uparrow \otimes \leftarrow\psi(\tau)=\psi(\sigma) \nwarrow \psi(\tau).$$
 The proof for the three other quadri-algebra products is similar. \\
 
 Consequently, $\psi$ is a quadri-algebra morphism. Moreover, $\psi \circ \phi((1)\otimes (1))=\psi(12)=(1) \otimes (1)$.
 As $A$ is generated by $(1) \otimes (1)$, $\psi \circ \phi=Id_A$, so $\phi$ is injective, and $A$ is isomorphic to $B$. \end{proof}

\subsection{Koszulity of $\Quad$}

The koszulity of $\Quad$ is proved in \cite{Vallette} by the poset method. Let us give here a second proof, with the help of the rewriting method of \cite{Hoffbeck,Dotsenko,Loday2}.

\begin{theo}
The operads $\Quad$ and $\Quad^!$ are Koszul.
\end{theo}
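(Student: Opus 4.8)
The plan is to prove Koszulity by exhibiting a quadratic Gröbner basis for one of the two operads, since Koszulity is preserved under Koszul duality (so establishing it for $\Quad^!$ immediately gives it for $\Quad$). The rewriting method of \cite{Hoffbeck,Dotsenko,Loday2} reduces the problem to the following: fix an admissible order on the set of tree monomials in $\F(G)_3$, orient each quadratic relation as a rewriting rule (replacing the leading monomial by the lower terms), and then verify that every critical pair in arity $4$ is confluent. If all arity-$4$ critical pairs resolve to a common normal form, the relations form a quadratic Gröbner basis and the operad is Koszul.

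I would work with $\Quad^!$, whose relations $(1)^!,\dots,(9)^!$ are already presented as chains of equalities between tree monomials, which makes them especially convenient to orient. Concretely, I would first choose a total order on the four generators, say $\nwarrow < \swarrow < \nearrow < \searrow$ (or whatever ordering makes the bookkeeping cleanest), extend it to the path-lexicographic order on the degree-$3$ planar trees decorated by $G=Vect(\nwarrow,\swarrow,\searrow,\nearrow)$, and use each relation in $(1)^!,\dots,(9)^!$ to rewrite every non-leading monomial appearing in that group into the single chosen leading representative. Because each group $(k)^!$ identifies several monomials with one value, this yields a rewriting system whose normal forms should biject with a basis of $\Quad^!_3$ (of dimension $23$) and, more importantly, whose normal forms in higher arity should match $\dim \Quad^!(n) = n^2$.

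The main verification, and the step I expect to be the real work, is the confluence check on arity $4$. Each critical pair arises from two ways of applying rewriting rules to a degree-$4$ tree where two overlapping degree-$3$ subtrees can each be reduced; I would enumerate these overlaps systematically and reduce both branches to normal form, checking they agree. Here the explicit combinatorial model is the decisive tool: by the Proposition describing $\Quad^!$ as $\Quad^!_n = Vect([n]^2)$ with the composition $(i,j)_m \circ (\ldots) = (\ldots+k_i, \ldots+l_j)_{\ldots}$, every tree monomial evaluates to a well-defined element $(i,j)_n$, and two monomials are equal in $\Quad^!$ precisely when they have the same image. Thus I can confirm confluence of a critical pair simply by checking that both reductions, and the putative common normal form, carry the same value in $[n]^2$ — the associativity-type computations in the proof of that Proposition (the nine bullet points giving the value of each relation group) are exactly the arithmetic I would reuse. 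Since there are only finitely many critical pairs and each reduces to a purely numerical equality in $[4]^2$, the check is finite and mechanical.

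The hard part will be organizing the enumeration of critical pairs so that no overlap is missed and the chosen monomial order is genuinely admissible (compatible with operadic composition on both sides), rather than the individual reductions, which are routine given the combinatorial model. Once confluence is established, the rewriting theorem yields that the oriented relations form a quadratic Gröbner basis, hence $\Quad^!$ is Koszul; by \cite{Vallette} and Koszul duality $\Quad = (\Quad^!)^!$ is Koszul as well, proving the theorem.
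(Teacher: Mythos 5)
Your overall strategy coincides with the paper's: reduce to $\Quad^!$ by Koszul duality, orient the $23$ relations $(1)^!,\ldots,(9)^!$ as monomial rewriting rules for a suitable admissible order, and verify confluence of the arity-$4$ critical pairs (the paper finds $156$ critical monomials and checks the corresponding diagrams by computer). However, the specific device you propose for the confluence check does not work as stated. You want to certify that a critical pair is confluent by verifying that both reductions ``carry the same value in $[4]^2$'' under the combinatorial model $\Quad^!_n=Vect([n]^2)$. But the evaluation of a tree monomial in that model factors through the quotient operad $\Quad^!$, and the two branches of a critical pair are by construction obtained from a common monomial by substituting equals for equals modulo the relations; hence they \emph{always} have the same value in $[4]^2$, whether or not the pair is confluent. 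Confluence is a statement in the free operad $\F(G)$: both branches must terminate at the \emph{same normal-form monomial}, and equality of images in $\Quad^!$ does not detect this unless one knows separately that distinct normal forms have distinct images. As described, your test would ``succeed'' for any orientation of any presentation, so it cannot carry the proof.

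The gap is repairable, and the repair is close to what you hint at when you say the normal forms ``should match $\dim \Quad^!(n)=n^2$'': if you enumerate the arity-$4$ normal forms of your rewriting system and find exactly $16=\dim\Quad^!_4$ of them, then, since normal forms always span the quotient, they are linearly independent, the evaluation map is injective on them, and your value-checking argument becomes valid --- indeed confluence of all arity-$4$ critical pairs follows at once from that count. Without this counting step (or a direct syntactic check of the $156$ diagrams, as the paper does by computer), the argument proves nothing. A second, smaller imprecision: the choice of order is not mere bookkeeping, since for a badly chosen admissible order the quadratic relations need not form a Gr\"obner basis at all; the paper's working choice is $\searrow<\nearrow<\swarrow<\nwarrow$ with the left comb below the right comb, and some of the resulting rules rewrite left combs to left combs, so termination must be checked against the full path-lexicographic order, not just the tree shapes.
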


\begin{proof}
By Koszul duality, it is enough to prove that $\Quad^!$ is Koszul. We choose the order
$\searrow<\nearrow<\swarrow<\nwarrow$ for the four operations, and the order $\btroisun<\btroisdeux$ for the two planar binary trees
of arity $3$. Relations $(1)^!,\ldots,(9)^!$ give $23$ rewriting rules:
\begin{align*}
\bdtroisdeux{$\nwarrow $}{$\nwarrow $},\bdtroisdeux{$\nwarrow $}{$\swarrow $}, \bdtroisdeux{$\nwarrow $}{$\searrow $},
\bdtroisdeux{$\nwarrow $}{$\nearrow $}&\longrightarrow \bdtroisun{$\nwarrow $}{$\nwarrow $},&
\bdtroisdeux{$\nearrow $}{$\nwarrow $}, \bdtroisdeux{$\nearrow $}{$\swarrow $}
&\longrightarrow \bdtroisun{$\nwarrow $}{$\nearrow $},\\
\bdtroisun{$\nearrow $}{$\nwarrow $}, \bdtroisdeux{$\nearrow $}{$\searrow $}, 
\bdtroisdeux{$\nearrow $ }{$\nearrow $}&\longrightarrow \bdtroisun{$\nearrow $}{$\nearrow $},&
\bdtroisdeux{$\swarrow $}{$\nwarrow $},\bdtroisdeux{$\swarrow $}{$\nearrow $}
&\longrightarrow \bdtroisun{$\nwarrow $}{$\swarrow $},\\
\bdtroisdeux{$\searrow $}{$\nwarrow $}&\longrightarrow \bdtroisun{$\nwarrow $}{$\searrow $},&
 \bdtroisdeux{$\searrow $}{$\nearrow $},\bdtroisun{$\nearrow $}{$\swarrow $}
&\longrightarrow \bdtroisun{$\nearrow $}{$\searrow $},\\
 \bdtroisdeux{$\swarrow $}{$\swarrow $}, \bdtroisdeux{$\swarrow $}{$\searrow $}, 
\bdtroisun{$\swarrow $ }{$\nwarrow $}&\longrightarrow \bdtroisun{$\swarrow $}{$\swarrow $},&
\bdtroisdeux{$\searrow $}{$\swarrow $}, \bdtroisun{$\swarrow $}{$\nwarrow $}
&\longrightarrow \bdtroisun{$\swarrow $}{$\searrow $},\\
\bdtroisdeux{$\searrow $}{$\searrow $},\bdtroisun{$\searrow $}{$\nwarrow $}, \bdtroisun{$\searrow $}{$\swarrow $},
\bdtroisun{$\searrow $}{$\nearrow $}, &\longrightarrow \bdtroisun{$\searrow $}{$\searrow $}.
\end{align*}

There are $156$ critical monomials, and the $156$ corresponding diagrams are confluent. Hence, $\Quad^!$ is Koszul.
We used a computer to find the critical monomials and to verify the confluence of the diagrams. \end{proof}

\section{Quadri-bialgebras}

\subsection{Units and quadri-algebras}

Let $A,B$ be a vector spaces. We put $A\totimes B=(K\otimes B)\oplus (A\otimes B)\oplus (A\otimes K)$.
Clearly, if $A,B,C$ are three vector spaces, $(A\totimes B)\totimes C=A\totimes (B\totimes C)$.

\begin{prop}\begin{enumerate}
\item Let $A$ be a quadri-algebra. We extend the four products on $A\totimes A$ in the following way: if $a,b \in A$,
\begin{align*}
a\nwarrow 1&=a,&a\nearrow 1&=0,&1\nwarrow a&=0,&1\nearrow a&=0,\\
a\swarrow 1&=0,&a\searrow 1&=0,&1\swarrow a&=0,&1\searrow a&=a.
\end{align*}
The nine relations defining quadri-algebras are true on $A\totimes A\totimes A$.
\item Let $A,B$ be two quadri-algebras. Then $A\totimes B$ is a quadri-algebra with the following products:
\begin{itemize}
\item if $a,a' \in A\sqcup K$, $b,b'\in B\sqcup K$, with $(a,a')\notin K^2$ and $(b,b') \notin K^2$ :
\begin{align*}
(a\otimes b)\nwarrow (a'\otimes b')&=(a\uparrow a')\otimes (b\leftarrow b'),&(a\otimes b)\nearrow (a'\otimes b')&=(a\uparrow a')\otimes (b\rightarrow b'),\\
(a\otimes b)\swarrow (a'\otimes b')&=(a\downarrow a')\otimes (b\leftarrow b'),&(a\otimes b)\searrow (a'\otimes b')&=(a\downarrow a')\otimes (b\rightarrow b').
\end{align*}
\item If $a,a'\in A$:
\begin{align*}
(a\otimes 1)\nwarrow (a'\otimes 1)&=(a\nwarrow a')\otimes 1,&(a\otimes 1)\nearrow (a'\otimes 1)&=(a\nearrow a')\otimes 1,\\
(a\otimes 1)\swarrow (a'\otimes 1)&=(a\swarrow a')\otimes 1,&(a\otimes 1)\searrow (a'\otimes 1)&=(a\searrow a')\otimes 1.
\end{align*}
\item If $b,b'\in B$:
\begin{align*}
(1\otimes b)\nwarrow (1\otimes b')&=1\otimes (b\nwarrow b'),&(1\otimes b)\nearrow (1\otimes b')&=1\otimes (b\nearrow b'),\\
(1\otimes b)\swarrow (1\otimes b')&=1\otimes (b\swarrow b'),&(1\otimes b)\searrow (1\otimes b')&=1\otimes (b\searrow b').
\end{align*}\end{itemize}\end{enumerate}\end{prop}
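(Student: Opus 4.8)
The plan is to prove the two parts separately, in each case reducing the claim to a finite case check that exploits the two dendriform substructures carried by any quadri-algebra together with the operad morphism $\Theta$ of Proposition~\ref{6}.

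For the first part, I would begin by unwinding the space involved. Writing $\bar A=A\oplus K1$, the space $A\totimes A\totimes A$ is the span of the elementary tensors $u_1\otimes u_2\otimes u_3$, with $u_i\in A\cup\{1\}$, in which no two \emph{adjacent} entries equal $1$. This restriction is exactly what makes the argument work: in each of the nine relations every product is applied either to the adjacent pair $(x,y)$ or to the adjacent pair $(y,z)$, and then to the outcome, which always lies in $A\cup\{0\}$ and is never $1$. Hence ``no adjacent units'' is precisely the condition under which one never has to form the product of two units, and in particular never the undefined $1\star 1$, so both sides of every relation are well defined. By multilinearity it then suffices to check the nine relations on such elementary triples. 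The case $x,y,z\in A$ is the quadri-algebra axiom of $A$; the only remaining admissible configurations are $x=1$, $y=1$, $z=1$ (exactly one unit) and $x=z=1$ with $y\in A$ (the unique admissible two-unit case). In each one I would substitute the unit rules and check that both sides either vanish or collapse to the same element of $A$. It is convenient to note first that the rules make $1$ a left unit for $\rightarrow,\downarrow$ and a right unit for $\leftarrow,\uparrow$, so that the vertical pair $(\uparrow,\downarrow)$ and the horizontal pair $(\leftarrow,\rightarrow)$ each restrict to the standard unital dendriform conventions; the individual checks are then immediate.

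For the second part, I would split $A\totimes B$ into its three regions $\mathcal A=A\otimes 1$, $\mathcal B=1\otimes B$ and $\mathcal C=A\otimes B$, and verify the nine relations on triples according to the regions occupied by $x,y,z$. Three facts carry the argument. First, $\mathcal A$ and $\mathcal B$ are quadri-subalgebras: the edge rules restrict the four products to the genuine quadri-products of $A$ and of $B$, so the relations hold there by hypothesis. Second, on the bulk the four products factorise as $\nwarrow=\uparrow_A\otimes\leftarrow_B$, $\nearrow=\uparrow_A\otimes\rightarrow_B$, $\swarrow=\downarrow_A\otimes\leftarrow_B$, $\searrow=\downarrow_A\otimes\rightarrow_B$; up to the order of the two tensor factors this exhibits $\mathcal C$, together with all its interactions involving units (using the unital extensions from the first part), as a $\Dend\otimes\Dend$-algebra, with $\bar A$ carrying its vertical dendriform structure $(\uparrow,\downarrow)$ and $\bar B$ its horizontal one $(\leftarrow,\rightarrow)$. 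Since $\Theta$ is a morphism of operads (Proposition~\ref{6}), every $\Dend\otimes\Dend$-algebra is a quadri-algebra by pullback, so each quadri-relation on the bulk reduces to the two corresponding dendriform relations, one for $(\uparrow_A,\downarrow_A)$ in $\bar A$ and one for $(\leftarrow_B,\rightarrow_B)$ in $\bar B$, both of which hold by the first part. Third, the genuinely mixed triples are handled by the same factorised rule together with the vanishing of the unit products $1\leftarrow b=b\rightarrow 1=1\uparrow a=a\downarrow 1=0$, which annihilates most cross terms outright.

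The main obstacle is the bookkeeping in the second part: one must check that the piecewise definition is coherent, that is, that whenever an intermediate product lands on an edge region $\mathcal A$ or $\mathcal B$ and is then multiplied by a bulk element, the edge rule (full quadri-structure) and the bulk rule (factorised dendriform structure) fit together so that both sides of each relation agree. I expect that after recording which of the $3^3$ region placements are nonzero, the nontrivial cases collapse to a short list governed by the two dendriform structures, and that the zero-producing unit rules eliminate the rest; the only care needed is to keep the edge products, which remember the full quadri-structure of $A$ and of $B$ rather than merely $\uparrow,\downarrow$ and $\leftarrow,\rightarrow$, distinct from the bulk products throughout.
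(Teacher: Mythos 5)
Your route coincides with the paper's: part~1 is a direct case-by-case verification on elementary tensors involving units, and part~2 obtains the bulk structure by observing that $A\otimes B$ is a $\Dend\otimes\Dend$-algebra, hence a quadri-algebra by pullback along the morphism $\Theta$ of Proposition~\ref{6} (with $\nwarrow=\uparrow\otimes\leftarrow$, etc.), the interactions with the edge regions $A\otimes 1$ and $1\otimes B$ being left to direct computation. Since the paper records almost none of these computations, your fleshing-out of the mixed cases in part~2 is the substantive content, and it is organized correctly.

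There is, however, one genuine error in part~1. The space $A\totimes A\totimes A$ is the augmentation ideal of $(K\oplus A)^{\otimes 3}$: expanding $(A\totimes A)\totimes A=(K\otimes A)\oplus\bigl((A\totimes A)\otimes A\bigr)\oplus\bigl((A\totimes A)\otimes K\bigr)$ shows that it contains $1\otimes 1\otimes c$ and $c\otimes 1\otimes 1$; only $1\otimes 1\otimes 1$ is excluded. So your characterization ``no two adjacent entries equal $1$'' is false, and with it the blanket claim that both sides of every relation are well defined on all of $A\totimes A\totimes A$; your case list silently drops the configurations $x=y=1$ and $y=z=1$. The omission is repairable but must be addressed explicitly: on $1\otimes 1\otimes c$ the left-hand side of each of the nine relations requires a product of two units and is undefined (while the right-hand side is defined), and symmetrically on $c\otimes 1\otimes 1$ the right-hand side is undefined; so the statement has to be read as asserting the relations only where all intervening products are defined, and on these two configurations there is then nothing to check. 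The four configurations you do treat (a single unit in any position, and $x=z=1$ with $y\in A$) are exactly those on which everything is defined, and your verification there, via the observation that $1$ is a one-sided unit for $\leftarrow,\rightarrow,\uparrow,\downarrow$ in the stated pattern, is correct. With that repair, the proof stands and matches the paper's.
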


\begin{proof}
1. It is shown by direct verifications. \\

2. As $(A,\uparrow,\downarrow)$ and $(B,\leftarrow,\rightarrow)$ are dendriform algebras, $A\otimes B$ is a $\Dend \otimes \Dend$-algebra,
so is a quadri-algebra by Proposition \ref{6}, with $\nwarrow=\uparrow \otimes \leftarrow$, $\swarrow=\downarrow \otimes \leftarrow$, 
$\searrow=\downarrow \otimes \rightarrow$ and $\nearrow=\uparrow \otimes \rightarrow$. 
The extension of the quadri-algebra axioms to $A\totimes B$ is verified by direct computations. \end{proof} \\

{\bf Remark.} There is a second way to give $A\totimes B$ a structure of quadri-algebra with the help of the associativity of $\star$:
\begin{align*}
\mbox{If $a\in A$ or $a'\in A$, $b,b'\in K\oplus B$,}&\begin{cases}
(a\otimes b)\nwarrow (a'\otimes b')&=(a\nwarrow a')\otimes (b\star b'),\\
(a\otimes b)\swarrow (a'\otimes b')&=(a\swarrow a')\otimes (b\star b'),\\
(a\otimes b)\searrow (a'\otimes b')&=(a\searrow a')\otimes (b\star b'),\\
(a\otimes b)\nearrow (a'\otimes b')&=(a\nearrow a')\otimes (b\star b');
\end{cases}\\ \\
\mbox{if $b,b'\in K\oplus B$},&\begin{cases}
(1\otimes b) \nwarrow (1\otimes b')&=1\otimes (b\nwarrow b'),\\
(1\otimes b) \swarrow (1\otimes b')&=1\otimes (b\swarrow b'),\\
(1\otimes b) \searrow (1\otimes b')&=1\otimes (b\searrow b'),\\
(1\otimes b) \nearrow (1\otimes b')&=1\otimes (b\nearrow b').
\end{cases}\end{align*}
 $A\otimes K$ and $K \otimes B$ are quadri-subalgebras of $A\totimes B$, respectively isomorphic to $A$ and $B$.

\subsection{Definitions and example of $\FQSym$}

\begin{defi}\label{10}
A quadri-bialgebra is a family $(A,\nwarrow,\swarrow,\searrow,\nearrow,\tdelta_\nwarrow,\tdelta_\swarrow,\tdelta_\searrow,\tdelta_\nearrow)$ such that:
\begin{itemize}
\item $(A\nwarrow,\swarrow,\searrow,\nearrow)$ is a quadri-algebra.
\item $(A,\tdelta_\nwarrow,\tdelta_\swarrow,\tdelta_\searrow,\tdelta_\nearrow)$ is a quadri-coalgebra.
\item We extend the four coproducts in the following way:
\begin{align*}
\Delta_\nwarrow&:\begin{cases}
A&\longrightarrow A\otimes A\\
a&\longrightarrow \tdelta_\nwarrow(a)+a\otimes 1,
\end{cases}&\Delta_\nearrow&:\begin{cases}
A&\longrightarrow A\otimes A\\
a&\longrightarrow \tdelta_\nearrow(a),
\end{cases}\\ \\
\Delta_\swarrow&:\begin{cases}
A&\longrightarrow A\otimes A\\
a&\longrightarrow \tdelta_\swarrow(a),
\end{cases}&\Delta_\searrow&:\begin{cases}
A&\longrightarrow A\otimes A\\
a&\longrightarrow \tdelta_\searrow(a)+1\otimes a.
\end{cases} \end{align*}
For all $a,b\in A$:
For all $a,b\in A$:
\begin{align*}
\Delta_\nwarrow(a \nwarrow b)&=\Delta_\uparrow(a)\nwarrow \Delta_\leftarrow(b)&
\Delta_\nearrow(a \nwarrow b)&=\Delta_\uparrow(a)\nwarrow \Delta_\rightarrow(b)\\
\Delta_\nwarrow(a \swarrow b)&=\Delta_\uparrow(a)\swarrow \Delta_\leftarrow(b)&
\Delta_\nearrow(a \swarrow b)&=\Delta_\uparrow(a)\swarrow \Delta_\rightarrow(b)\\
\Delta_\nwarrow(a \searrow b)&=\Delta_\uparrow(a)\searrow \Delta_\leftarrow(b)&
\Delta_\nearrow(a \searrow b)&=\Delta_\uparrow(a)\searrow \Delta_\rightarrow(b)\\
\Delta_\nwarrow(a \nearrow b)&=\Delta_\uparrow(a)\nearrow \Delta_\leftarrow(b)&
\Delta_\nearrow(a \nearrow b)&=\Delta_\uparrow(a)\nearrow \Delta_\rightarrow(b)\\ \\
\Delta_\swarrow(a \nwarrow b)&=\Delta_\downarrow(a)\nwarrow \Delta_\leftarrow(b)&
\Delta_\searrow(a \nwarrow b)&=\Delta_\downarrow(a)\nwarrow \Delta_\rightarrow(b)\\
\Delta_\swarrow(a \swarrow b)&=\Delta_\downarrow(a)\swarrow \Delta_\leftarrow(b)&
\Delta_\searrow(a \swarrow b)&=\Delta_\downarrow(a)\swarrow \Delta_\rightarrow(b)\\
\Delta_\swarrow(a \searrow b)&=\Delta_\downarrow(a)\searrow \Delta_\leftarrow(b)&
\Delta_\searrow(a \searrow b)&=\Delta_\downarrow(a)\searrow \Delta_\rightarrow(b)\\
\Delta_\swarrow(a \nearrow b)&=\Delta_\downarrow(a)\nearrow \Delta_\leftarrow(b)&
\Delta_\searrow(a \nearrow b)&=\Delta_\downarrow(a)\nearrow \Delta_\rightarrow(b)
\end{align*}
\end{itemize}\end{defi}

{\bf Remark.} In other words, for all $a,b\in A$:
\begin{align*}
\tdelta_\nwarrow(a\nwarrow b)&=a'_\uparrow \uparrow b \otimes a''_\uparrow 
+a'_\uparrow\uparrow b'_\leftarrow \otimes a''_\uparrow \leftarrow b''_\leftarrow,\\
\tdelta_\swarrow(a\nwarrow b)&=a'_\downarrow \uparrow b \otimes a''_\downarrow 
+a'_\downarrow\uparrow b'_\leftarrow \otimes a''_\downarrow \leftarrow b''_\leftarrow,\\
\tdelta_\searrow(a\nwarrow b)&=a'_\downarrow \otimes a''_\downarrow \leftarrow b
+a'_\downarrow\uparrow b'_\rightarrow \otimes a''_\downarrow \leftarrow b''_\rightarrow,\\
\tdelta_\nearrow(a\nwarrow b)&=a'_\uparrow \otimes a''_\uparrow \leftarrow b
+a'_\uparrow\uparrow b'_\rightarrow \otimes a''_\uparrow \leftarrow b''_\rightarrow,\\ \\
\tdelta_\nwarrow(a\swarrow b)&=a'_\uparrow \downarrow b \otimes a''_\uparrow 
+a'_\uparrow\downarrow b'_\leftarrow \otimes a''_\uparrow \leftarrow b''_\leftarrow,\\
\tdelta_\swarrow(a\swarrow b)&=b\otimes a+b'_\leftarrow \otimes a\leftarrow b''_\leftarrow
+a'_\downarrow \downarrow b \otimes a''_\downarrow+a'_\downarrow\downarrow b'_\leftarrow \otimes a''_\downarrow \leftarrow b''_\leftarrow,\\
\tdelta_\searrow(a\swarrow b)&=b'_\rightarrow \otimes a\leftarrow b''_\rightarrow
+a'_\downarrow\downarrow b'_\rightarrow \otimes a''_\downarrow \leftarrow b''_\rightarrow,\\
\tdelta_\nearrow(a\swarrow b)&=a'_\uparrow\downarrow b'_\rightarrow \otimes a''_\uparrow \leftarrow b''_\rightarrow,\\ \\
\tdelta_\nwarrow(a\searrow b)&=a\downarrow b'_\leftarrow\otimes b''_\leftarrow
+a'_\uparrow\downarrow b'_\leftarrow \otimes a''_\uparrow \rightarrow b''_\leftarrow,\\
\tdelta_\swarrow(a\searrow b)&=b'_\leftarrow\otimes a \rightarrow b''_\leftarrow 
+a'_\downarrow\downarrow b'_\leftarrow \otimes a''_\downarrow \rightarrow b''_\leftarrow,\\
\tdelta_\searrow(a\searrow b)&=b'_\rightarrow\otimes a \rightarrow b''_\rightarrow 
+a'_\downarrow\downarrow b'_\rightarrow \otimes a''_\downarrow \rightarrow b''_\rightarrow,\\
\tdelta_\nearrow(a\searrow b)&=a\downarrow b''_\rightarrow\otimes b''_\rightarrow
+a'_\uparrow\downarrow b'_\rightarrow \otimes a''_\uparrow \rightarrow b''_\rightarrow,\\ \\
\tdelta_\nwarrow(a\nearrow b)&=a\uparrow b'_\leftarrow\otimes b''_\leftarrow
+a'_\uparrow\uparrow b'_\leftarrow \otimes a''_\uparrow \rightarrow b''_\leftarrow,\\
\tdelta_\swarrow(a\nearrow b)&=a'_\downarrow\uparrow b'_\leftarrow \otimes a''_\downarrow \rightarrow b''_\leftarrow,\\
\tdelta_\searrow(a\nearrow b)&=a'_\downarrow \otimes a''_\downarrow \rightarrow b
+a'_\downarrow\uparrow b'_\rightarrow \otimes a''_\downarrow \rightarrow b''_\rightarrow,\\
\tdelta_\nearrow(a\nearrow b)&=a\otimes b+a'_\uparrow\otimes a''_\uparrow \rightarrow b+a\uparrow b''_\rightarrow\otimes b''_\rightarrow
+a'_\uparrow\uparrow b'_\rightarrow \otimes a''_\uparrow \rightarrow b''_\rightarrow.
\end{align*}
Consequently, we obtain four dendriform bialgebras \cite{FoissyDend}: 
\begin{align*}
&(A,\leftarrow,\rightarrow,\Delta_\leftarrow,\Delta_\rightarrow),&
&(A,\downarrow^{op},\uparrow^{op},\Delta_\downarrow^{op},\Delta_\uparrow^{op}),&
&(A,\rightarrow^{op},\leftarrow^{op},\Delta_\uparrow,\Delta_\downarrow),&
&(A,\uparrow,\downarrow,\Delta_\rightarrow^{op},\Delta_\leftarrow^{op}).
\end{align*}

\begin{prop} \label{11}
The augmentation ideal of $\FQSym$ is a quadri-bialgebra.
\end{prop}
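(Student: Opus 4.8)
The quadri-algebra and quadri-coalgebra structures on the augmentation ideal of $\FQSym$ are already available, with the explicit shuffle and deconcatenation formulas recalled above, so the entire content of the statement is the sixteen compatibility relations of Definition \ref{10}. The plan is to verify these directly on the permutation basis. The right-hand sides are to be read as the quadri-products of the extended coproducts inside $A \totimes A$, exactly as in the Proposition on units and quadri-algebras (so that, for instance, the symbol $\nwarrow$ on the right of a relation acts by $\uparrow$ on the first tensor slot and by $\leftarrow$ on the second, these being the products coming from $\Theta$ in Proposition \ref{6}). Thus each relation becomes an equality of two elements of $\FQSym \otimes \FQSym$ which I would expand and match term by term.

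The combinatorial engine is the classical fact that $\FQSym$ is a graded connected Hopf algebra whose product is the shifted shuffle and whose total coproduct $\Delta_*$ is standardized deconcatenation, together with the elementary observation that a prefix of a shuffle of two words is itself a shuffle of a prefix of each word. For $\sigma \in \S_m$, $\tau \in \S_n$, any $\gamma$ occurring in $\sigma \star \tau$ interleaves $\sigma$ (carrying the values $1,\dots,m$) with $\tau[m]$ (carrying $m+1,\dots,m+n$), and cutting $\gamma$ at a position $i$ amounts to cutting $\sigma$ at some $j$ and $\tau$ at some $k$ with $j+k=i$. With this picture the four products are recorded by the factor supplying the \emph{first} and the \emph{last} letter of $\gamma$, while the four coproducts are recorded by the sides of the cut on which the global minimum (the value $1$, always from $\sigma$) and the global maximum (the value $m+n$, always from $\tau[m]$) fall.

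I would then run the matching relation by relation. Take $\Delta_\nwarrow(\sigma \nwarrow \tau)$ as the model case: here $\gamma$ has first and last letters from $\sigma$, and $\Delta_\nwarrow$ keeps the cuts for which both extreme values lie on the left. The condition ``minimum on the left'' pins down the coproduct of $\sigma$ (its own minimum in the prefix), the condition ``maximum on the left'' pins down the coproduct of $\tau$ (its own maximum in the prefix), and the prefix/suffix factorization forces the left tensor slot to be a shuffle of the two prefixes whose first letter still comes from $\sigma$ (that is, $\sigma_{(1)} \uparrow \tau_{(1)}$) and the right slot to be a shuffle of the two suffixes whose last letter still comes from $\sigma$ (that is, $\sigma_{(2)} \leftarrow \tau_{(2)}$). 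Summing over all admissible cuts reproduces precisely the prescribed product of coproducts, and the fifteen remaining relations are obtained by the same bookkeeping together with the evident north/south and east/west symmetries of the array picture.

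The main obstacle is the careful matching of the left/right coproduct conventions on the $\tau$-factor. Because the coproducts are the transposes of the products under the self-dual pairing $\langle \sigma,\tau \rangle = \delta_{\sigma,\tau^{-1}}$, the side of the cut on which the maximum of $\tau$ lands is read through the inverse permutation, which is exactly why the horizontal coproducts $\Delta_\leftarrow,\Delta_\rightarrow$ enter the $\tau$-slot with the conventions they do; overlooking this twist produces an answer off by a $\leftarrow/\rightarrow$ exchange. The second delicate point is the boundary behavior: the grouplike corrections $a \otimes 1$ and $1 \otimes a$ built into the unital extensions of $\Delta_\nwarrow$ and $\Delta_\searrow$ must be made to correspond to the degenerate cuts in which all of $\sigma$, or all of $\tau$, lies on one side, and one checks via the unit rules $\sigma \uparrow 1 = \sigma$, $1 \leftarrow \tau = 0$ and their analogues that these produce exactly the extreme terms and no spurious ones. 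As consistency checks I would use that summing the relations row-wise and column-wise must recover the dendriform bialgebra structures of $\FQSym$ already recorded in the Remark and established in \cite{FoissyDend,FoissyPatras}, and that, by self-duality, each compatibility can equivalently be cast as a single identity $\langle \sigma \diamond' \tau,\, u \diamond v \rangle = \cdots$ involving only shuffle products and the pairing, giving an independent combinatorial verification.
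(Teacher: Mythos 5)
Your proposal is correct and follows essentially the same route as the paper: a direct combinatorial verification on the permutation basis, tracking which factor supplies the first and last letters of each shuffle and on which side of the cut the extreme values $1$ and $k+l$ fall, with one model compatibility worked out in full (the paper uses $\Delta_\nearrow(\sigma\nearrow\tau)$, you use $\Delta_\nwarrow(\sigma\nwarrow\tau)$) and the remaining fifteen handled by the same bookkeeping. Your extra attention to the $\leftarrow/\rightarrow$ convention on the $\tau$-slot and to the degenerate cuts matching the unital correction terms is exactly where the care is needed.
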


\begin{proof} As an example, let us prove the last compatibility. Let $\sigma,\tau$ be two permutations, of respective length $k$ and $l$.
Then $\Delta_\nearrow(\sigma \nearrow \tau)$ is obtained by shuffling in all possible ways the words $\sigma$ and  the shifting $\tau[k]$ of $\tau$,
such that the first letter comes from $\sigma$ and the last letter comes from $\tau[k]$,
and then cutting the obtained words in such a way that $1$ is in the left part and $k+l$ in the right part. 
Hence, the left part should contain letters coming from $\sigma$, including $1$, and starts by the first letter of $\sigma$, 
and the right part should contain letters coming from $\tau[k]$, including $k+l$, and ends with the last letter of $\tau[k]$. there are four possibilities:
\begin{itemize}
\item The left part contains only letters from $\sigma$ and the right part contains only letters form $\tau[k]$.
This gives the term $\sigma \otimes \tau$.
\item The left part contains only letters from $\sigma$, and the right part contains letters from $\sigma$ and $\tau[k]$.
This gives the term $\sigma_\uparrow' \otimes \sigma_\uparrow'' \rightarrow \tau$.
\item The left part contains letters from $\sigma$ and $\tau[k]$, and the right part contains only letters form $\tau[k]$.
This gives the term $\sigma \uparrow \tau_\rightarrow'\otimes \tau_\rightarrow''$.
\item Both parts contains letters from $\sigma$ and $\tau[k]$. This gives the term
$\sigma'_\uparrow \uparrow \tau'_\rightarrow \otimes \sigma''_\uparrow \rightarrow \tau''_\rightarrow$.
\end{itemize}
So:
$$\Delta_\nearrow(\sigma \nearrow \tau)=\sigma \otimes \tau+\sigma_\uparrow' \otimes \sigma_\uparrow'' \rightarrow \tau
+\sigma \uparrow \tau_\rightarrow'\otimes \tau_\rightarrow''
+\sigma'_\uparrow \uparrow \tau'_\rightarrow \otimes \sigma''_\uparrow \rightarrow \tau''_\rightarrow.$$
The other compatibilities are proved following the same lines. \end{proof}

\subsection{Other examples}

Let $F_\Quad(V)$ be the free quadri-algebra generated by $V$. As it is free, it is possible to define four coproducts satisfying the 
quadri-bialgebra axioms in the following way: for all $v\in V$,
$$\tdelta_\nwarrow(v)=\tdelta_\swarrow(v)=\tdelta_\searrow(v)=\tdelta_\nearrow(v)=0.$$
It is naturally graded by puting the elements of $V$ homogeneous of degree $1$. 

\begin{prop}
For any vector space $V$, $F_\Quad(V)$ is a quadri-bialgebra.
\end{prop}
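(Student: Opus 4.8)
The plan is to exploit freeness to \emph{define} the four coproducts by a single universal property, so that the compatibility relations of Definition \ref{10} hold by construction, and then to deduce the quadri-coalgebra axioms from the uniqueness half of the same universal property. Write $A=F_\Quad(V)$. Recall from the preceding proposition that $A\totimes A$, and hence (by associativity of $\totimes$) $A\totimes A\totimes A$, is a quadri-algebra.

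The first step is to repackage the four coproducts into one map. Using the bijection $\nwarrow\leftrightarrow(\uparrow,\leftarrow)$, $\nearrow\leftrightarrow(\uparrow,\rightarrow)$, $\swarrow\leftrightarrow(\downarrow,\leftarrow)$, $\searrow\leftrightarrow(\downarrow,\rightarrow)$, the sixteen compatibility identities of Definition \ref{10} read uniformly as $\Delta_{(u,h)}(a\diamond b)=\Delta_u(a)\diamond\Delta_h(b)$. Let $W$ be the four-dimensional algebra with basis $(e_\diamond)$ and the associative ``rectangular band'' product $e_{(u,h)}e_{(u',h')}=e_{(u,h')}$, and set $T=(A\totimes A)\otimes W$, equipped with the four products $(s\otimes w)\diamond(t\otimes w')=(s\diamond t)\otimes ww'$. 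Since the quadri-relations are multilinear and carry the same product word on each side, and since $ww'$ is associative and identical for all four $\diamond$, $T$ is again a quadri-algebra. Assembling $\Delta^\bullet(a)=\sum_\diamond\Delta_\diamond(a)\otimes e_\diamond$, one checks that $\Delta^\bullet\colon A\to T$ is a morphism of quadri-algebras \emph{if and only if} the relations of Definition \ref{10} hold: the $W$-coordinate of $\diamond$ selects exactly the groupings $\Delta_u\in\{\Delta_\uparrow,\Delta_\downarrow\}$ on the left factor and $\Delta_h\in\{\Delta_\leftarrow,\Delta_\rightarrow\}$ on the right.

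By freeness there is a unique quadri-algebra morphism $\Delta^\bullet\colon A\to T$ with $\Delta^\bullet(v)=(v\otimes 1)\otimes e_\nwarrow+(1\otimes v)\otimes e_\searrow$ for $v\in V$; its four coordinates are the sought coproducts, they satisfy $\tdelta_\diamond(v)=0$, and the compatibility relations hold automatically. It then remains to verify that $(A,\Delta_\nwarrow,\Delta_\swarrow,\Delta_\searrow,\Delta_\nearrow)$ is a quadri-coalgebra. For this I would package the nine coassociativity relations in the same spirit: both composites $(\Delta^\bullet\otimes\mathrm{Id})\circ\Delta^\bullet$ and $(\mathrm{Id}\otimes\Delta^\bullet)\circ\Delta^\bullet$ are quadri-algebra morphisms from $A$ into an iterated target built on $A\totimes A\totimes A$ and $W\otimes W$; they agree on each generator $v$ by a direct computation, hence coincide by the uniqueness clause of the universal property. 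Because $W$ is associative, the $W\otimes W$-bookkeeping matches the two bracketings, and reading off coordinates recovers precisely the nine quadri-coalgebra axioms.

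The main obstacle is not any single computation but getting this dictionary exactly right: one must check that ``$\Delta^\bullet$ is a quadri-morphism into $T$'' unpacks, boundary (unit) terms included, to the sixteen identities of Definition \ref{10}, and dually that the packaged coassociativity unpacks to the nine quadri-coalgebra relations. This is the mirror of the fact that the associativity of the rectangular-band-twisted product on $T$ encodes the nine quadri-algebra axioms, and it is where the matching of the $\uparrow/\downarrow$ and $\leftarrow/\rightarrow$ groupings against the unit conventions of $\totimes$ must be done with care. Once the dictionary is in place, both the well-definedness of the coproducts and their coassociativity follow at once from the universal property of $F_\Quad(V)$, so no separate induction on degree is needed.
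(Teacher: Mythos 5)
Your first step is a genuinely nice way to make rigorous what the paper only asserts: the target $T=(F_\Quad(V)\totimes F_\Quad(V))\otimes W$ with the rectangular-band product on $W$ is indeed a quadri-algebra (each of the nine relations only re-brackets a fixed three-letter word, and $W$ is associative), and requiring $\Delta^\bullet$ to be a morphism of quadri-algebras is exactly equivalent to the sixteen compatibilities of Definition \ref{10}, because those have the uniform form $\Delta_{(u,h)}(a\diamond b)=\Delta_u(a)\diamond\Delta_h(b)$. Freeness then gives existence and uniqueness of the four coproducts with $\tdelta_\diamond(v)=0$. (You still owe the counit-type verification that the $A\otimes K$ and $K\otimes A$ components of $\Delta_\nwarrow$ and $\Delta_\searrow$ are exactly $a\otimes 1$ and $1\otimes a$, so that the reduced coproducts land in $A\otimes A$; this is the boundary issue you flag, and it is not automatic.)

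The genuine gap is the coassociativity step. Coordinatewise equality of the two composites in $W\otimes W$ would assert $(\Delta_\alpha\otimes Id)\circ\Delta_\beta=(Id\otimes\Delta_\alpha)\circ\Delta_\beta$ for all sixteen pairs $(\alpha,\beta)$; this is a different (and generally false) statement which moreover does not imply the nine quadri-coalgebra axioms --- relation $(1,1)$ would additionally require $(Id\otimes(\Delta_\swarrow+\Delta_\searrow+\Delta_\nearrow))\circ\Delta_\nwarrow=0$. Collapsing $W\otimes W\to W$ by the band multiplication yields only four identities, too coarse to separate the nine relations: the compatibilities live on a $2\times 2$ index grid that $W$ encodes, whereas the nine coassociativity relations are indexed by the $9$-dimensional space $R_\Quad$ inside the $32$-dimensional space of iterated coproducts, and triple products in $W$ still span only four dimensions. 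Concretely, the two composites already fail to agree on a generator: applied to $v$, the term $1\otimes v\otimes 1$ receives the decorations $e_\searrow\otimes e_\nwarrow$ from one composite and $e_\nwarrow\otimes e_\searrow$ from the other, and the terms $v\otimes 1\otimes 1$ and $1\otimes 1\otimes v$ carry two $W$-factors in one iterated target but only one in the other, because the unit components of $T\totimes A$ and $A\totimes T$ are decorated differently. So there is no common target in which the uniqueness clause applies. The paper closes this step by a different argument, which you need here: for each of the nine relations, the set $B$ of elements satisfying it contains $V$ and is stable under the four products --- the stability computation uses your sixteen compatibilities together with the fact that the nine relations hold on $A\totimes A\totimes A$ --- hence $B$ is a quadri-subalgebra containing the generators and equals $F_\Quad(V)$.
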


\begin{proof} We only have to prove the nine compatibilities of quadri-coalgebras. We consider:
$$B_{(1,1)}=\{a\in F_\Quad(V)\mid (\Delta_\nwarrow\otimes Id)\circ \Delta_\nwarrow(a)=(Id \otimes \Delta)\circ \Delta_\nwarrow(a)\}.$$
First, for all $v \in V$:
$$(\Delta_\nwarrow\otimes Id)\circ \Delta_\nwarrow(v)
=v \otimes 1\otimes 1=(Id \otimes \Delta)\circ \Delta_\nwarrow(v),$$
so $V\subseteq B_{(1,1)}$. If $a,b\in B_{(1,1)}$ and $\diamond \in \{\nwarrow,\swarrow,\searrow,\nearrow\}$:
\begin{align*}
(\Delta_\nwarrow \otimes Id)\circ \Delta_\nwarrow(a\diamond b)
&=((\Delta_\uparrow \otimes Id)\circ \Delta_\uparrow(a))\diamond (\Delta_\leftarrow \otimes Id)\circ \Delta_\leftarrow(b))\\
&=((Id \otimes \Delta)\circ \Delta_\uparrow(a))\diamond ((Id \otimes \Delta)\circ \Delta_\leftarrow(b))\\
&=(Id \otimes \Delta)(\Delta_\uparrow(a)\diamond \Delta_\leftarrow(b))\\
&=(Id\otimes \Delta)\circ \Delta_\nwarrow(a\diamond b).
\end{align*}
So a$\diamond b \in B_{(1,1)}$, and $B_{(1,1)}$ is a quadri-subalgebra of $F_\Quad(V)$ containing $V$: $B_{(1,1)}=F_\Quad(V)$, 
and the quadri-coalgebra relation (1.1) is satisfied. The eight other relations can be proved in the same way. 
Hence, $F_\Quad(V)$ is a quadri-bialgebra. \end{proof} \\

{\bf Remarks.} \begin{enumerate}
\item We deduce that $(F_\Quad(V),\leftarrow,\rightarrow,\Delta_\leftarrow,\Delta_\rightarrow)$ and
$(F_\Quad(V),\uparrow,\downarrow,\Delta_\rightarrow^{op},\Delta_\leftarrow^{op})$ are bidendriform bialgebras,
in the sense of \cite{FoissyDend,FoissyDend2}; consequently,
$(F_\Quad(V),\leftarrow,\rightarrow)$ and $(F_\Quad(V),\uparrow,\downarrow)$ are free dendriform algebras.
\item When $V$ is one-dimensional, here are the respective dimensions $a_n$, $b_n$ and c$_n$ of the homogeneous components, 
of the primitive elements, and of the dendriform primitive elements,  of degree $n$, for these two dendriform bialgebras:
$$\begin{array}{c|c|c|c|c|c|c|c|c|c|c}
n&1&2&3&4&5&6&7&8&9&10\\
\hline a_n&1&4&23&156&1\:162&9\:162&75\:819&644\:908&5\:616\:182&49\:826\:712\\
\hline b_n&1&3&16&105&768&6\:006&49\:152&415\:701&3\:604\:480&31\:870\:410\\
\hline c_n&1&2&10&64&462&3\:584&29\:172&245\:760&2\:124\:694&18\:743\:296
\end{array}$$
These are sequences A007297, A085614 and A078531 of \cite{Sloane}.
\item Let $V$ be finite-dimensional. The graded dual $F_\Quad(V)^*$ of $F_\Quad(V)$ is also a quadri-bialgebra. By the bidendriform rigidity theorem
\cite{FoissyDend,FoissyDend2}, $(F_\Quad(V)^*,\leftarrow,\rightarrow)$ and $(F_\Quad(V)^*,\uparrow,\downarrow)$ are free dendriform algebras.
Moreover, for any $x,y \in V$, nonzero, $x \nwarrow y$ and $x \searrow y$ are nonzero elements of $Prim_\Quad(F_\Quad(V))$, which implies that
$(F_\Quad(V)^*,\nwarrow,\swarrow,\searrow,\nearrow)$ is not generated in degree $1$, so  is not free as a quadri-algebra.
Dually, the quadri-coalgebra $F_\Quad(V)$ is not cofree.
\end{enumerate}

We now give a similar construction on the Hopf algebra of packed words $\WQSym$, see \cite{Thibon} for more details on this combinatorial Hopf algebra.
\begin{theo}
For any nonempty packed word $w$ of length $n$, we put:
\begin{align*}
m(w)&=\max\{i \in [n]\mid w(i)=1\},&M(w)&=\max\{i\in [n]\mid w(i)=\max(w)\}.
\end{align*}
We define four products on the augmentation ideal of $\WQSym$ in the following way: if $u,v$ are packed words of respective lengths $k,l\geq 1$:
\begin{align*}
u\nwarrow v&=\sum_{\substack{Pack(w(1)\ldots w(k))=u,\\ Pack(w(k+1)\ldots w(k+l)=v,\\ m(w),M(w)\leq k}} w,&
u\nearrow v&=\sum_{\substack{Pack(w(1)\ldots w(k))=u,\\ Pack(w(k+1)\ldots w(k+l)=v,\\ m(w)\leq k <M(w)}} w,\\
u\swarrow v&=\sum_{\substack{Pack(w(1)\ldots w(k))=u,\\ Pack(w(k+1)\ldots w(k+l)=v,\\ M(w)\leq k<m(w)}} w,&
u\searrow v&=\sum_{\substack{Pack(w(1)\ldots w(k))=u,\\ Pack(w(k+1)\ldots w(k+l)=v,\\ k<m(w),M(w)}} w.
\end{align*}
We define four coproducts on the augmentation ideal of $\WQSym$ in the following way: if $u$ is a packed word of length $n\geq 1$,
\begin{align*}
\Delta_\nwarrow(u)&=\sum_{u(1),u(n)\leq i<\max(u)} u_{\mid [i]}\otimes Pack(u_{\mid [\max(u)]\setminus [i]}),\\
\Delta_\swarrow(u)&=\sum_{u(n)\leq i<u(1)} u_{\mid [i]}\otimes Pack(u_{\mid [\max(u)]\setminus [i]}),\\
\Delta_\searrow(u)&=\sum_{1\leq i<u(1),u(n)} u_{\mid [i]}\otimes Pack(u_{\mid [\max(u)]\setminus [i]}),\\
\Delta_\nearrow(u)&=\sum_{u(1)\leq i<u(n)} u_{\mid [i]}\otimes Pack(u_{\mid [\max(u)]\setminus [i]}).
\end{align*}
These products and coproducts make $\WQSym$ a quadri-bialgebra. The induced Hopf algebra structure is the usual one.
\end{theo}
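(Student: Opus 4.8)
The plan is to follow the same strategy as for $\FQSym$ in Proposition~\ref{11}, splitting the verification into four tasks: the quadri-algebra axioms, the quadri-coalgebra axioms, the sixteen product--coproduct compatibilities of Definition~\ref{10}, and the identification of the underlying bialgebra with the usual Hopf algebra $\WQSym$. The last task I would settle first and immediately: summing the four products gives $\star=\sum_w w$ over all packed words $w$ of length $k+l$ whose two consecutive blocks pack, under $Pack$, to $u$ and $v$, which is the usual product; and summing the four coproducts, together with the unit terms $a\otimes 1$ and $1\otimes a$ prescribed for $\Delta_\nwarrow$ and $\Delta_\searrow$ in Definition~\ref{10}, ranges over all cut values $0\leq i\leq \max(u)$ and so reconstitutes the usual coproduct. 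Hence the pair $(\star,\Delta_*)$ is exactly the standard Hopf structure, and only the refined axioms remain.

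For the quadri-algebra axioms I would read an iterated product of three packed words $u,v,w$ of lengths $k,l,m$ as a sum over packed words $z$ of length $k+l+m$ whose three consecutive blocks pack to $u$, $v$ and $w$. The selection rules defining the four products involve the last positions $m(\cdot)$ and $M(\cdot)$ of the letter $1$ and of the maximal letter; the key point is that each inner product (say $u\diamond v$, producing a word $y$ of length $k+l$) imposes such a condition on $y=Pack(z(1)\ldots z(k+l))$, so I would record in a preliminary lemma how $m$ and $M$ behave under restriction to a prefix followed by $Pack$. With that dictionary, every condition occurring on either side of a relation $(i,j)$ becomes a condition on the last $1$ and last maximal letter of a suitable prefix of $z$, and each of the nine identities reduces to a set-theoretic equality between the admissible $z$'s. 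Equivalently, this is the statement that $(\WQSym,\leftarrow,\rightarrow)$ and $(\WQSym,\uparrow,\downarrow)$ are dendriform algebras interlacing as in Proposition~\ref{6}.

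The quadri-coalgebra axioms I would treat dually. An iterated coproduct corresponds to a pair of cut values, cutting $u$ by value into three parts, each restricted and then repacked; each of the nine relations reduces to an equality of index ranges obtained by comparing the two cut values with the first letter $u(1)$, the last letter $u(n)$, and the first and last letters surviving the repacking of the complementary subwords, for which I would record the dual behaviour of $u(1),u(n)$ under restriction-by-value and $Pack$. For the sixteen compatibilities I would mimic the four-case analysis of Proposition~\ref{11}: to compute $\Delta_\diamond(u\,\diamond'\,v)$ I form the product word $w$ and then cut it by value; the cut either keeps all $u$-letters on the left, keeps all $v$-letters on the right, or straddles, and tracking which block of $w$ holds $1$, $\max$, the first letter $w(1)$ and the last letter $w(k+l)$ produces exactly the four tensor terms $\Delta_\circ(u)\,\diamond\,\Delta_\bullet(v)$ on the right-hand side.

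The main obstacle is this last compatibility step. There the product conditions (the positions of the last $1$ and last maximal letter in the shuffled word) must be reconciled simultaneously with the coproduct conditions (the cut value versus the first and last letters), while the repacking $Pack(u_{\mid[\max(u)]\setminus[i]})$ shifts precisely those extremal letters one is trying to control; keeping the bookkeeping consistent across all sixteen identities is the delicate part, even though each individual identity is the same kind of combinatorial case distinction already carried out for $\FQSym$.
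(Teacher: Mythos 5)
Your proposal follows essentially the same route as the paper's proof: identify $\star$ and $\tilde\Delta_*$ with the usual Hopf structure, realize each of the nine product (resp.\ coproduct) relations as the sub-sum of the associative triple product (resp.\ double coproduct) cut out by comparing $m$ and $M$ (resp.\ the cut indices against $u(1)$ and $u(n)$) with the two block boundaries, and verify the compatibilities by the same four-case left/right/straddle analysis used for $\FQSym$ in Proposition \ref{11}. The only difference is that you make explicit, as a preliminary lemma, the behaviour of $m$ and $M$ under prefix restriction and $Pack$, which the paper uses implicitly; this is a harmless (indeed welcome) elaboration, not a change of method.
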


\begin{proof} For all packed words $u,v$ of respective lengths $k,l \geq 1$:
$$u\star v=\sum_{\substack{Pack(w(1)\ldots w(k))=u,\\ Pack(w(k+1)\ldots w(k+l)=v}} w.$$
So $\star$ is the usual product of $\WQSym$, and is associative. In particular, if $u,v,w$ are packed words of respective lengths $k,l,n \geq 1$:
$$u\star (v\star w)=(u\star v)\star w=\sum_{\substack{Pack(x(1)\ldots x(k))=u,\\ Pack(x(k+1)\ldots x(k+l)=v,\\
Pack(x(k+l+1),\ldots,x(k+l+n))=w}} x.$$
Then each side of relations $(1,1)\ldots (3,3)$ is the sum of the terms in this expression such that:
\begin{align*}
&m(x),M(x)\leq k&&m(x)\leq k<M(x)\leq k+l&&m(x)\leq k <k+l<M(x)\\
&M(x)\leq k<m(x)\leq k+l&&k<m(x),M(x)\leq k+l&&k<m(x)\leq k+l<M(x)\\
&M(x)\leq k<k+l<m(x)&&k<M(x)\leq k+l<m(x)&&k+l<m(x),M(x)
\end{align*}
So $(\WQSym,\nwarrow,\swarrow,\searrow,\nearrow)$ is a quadri-algebra. \\

For all packed word $u$ of length $n\geq 1$:
$$\tdelta(u)=\sum_{1\leq i<\max(u)}u_{\mid [i]}\otimes Pack(u_{\mid [\max(u)]\setminus [i]}).$$
So $\tdelta$ is the usual coproduct of $\WQSym$ and is coassociative. Moreover:
$$(\tdelta \otimes Id)\circ \tdelta(u)=(Id \otimes \tdelta)\circ \tdelta(u)=\sum_{1\leq i<j<\max(u)}
u_{\mid [i]} \otimes Pack(u_{\mid [j]\setminus [i]})\otimes Pack(u_{\mid [\max(u)]\setminus [j]}).$$
Then each side of relations $(1,1)\ldots (3,3)$ is the sum of the terms in this expression such that:
\begin{align*}
&u(1),u(n)\leq i&&u(1)\leq i<u(n)\leq j&&u(1)\leq i<j< u(n)\\
&u(n)\leq i<u(1)\leq j&&i<u(1),u(n)\leq j&&i<u(1)\leq j<u(n)\\
&u(n)\leq i<j< u(1)&&i<u(n)\leq j<u(1)&&j<u(1),u(n)
\end{align*}
So $(\WQSym,\Delta_\nwarrow,\Delta_\swarrow,\Delta_\searrow,\Delta_\nearrow)$ is a quadri-coalgebra.\\

Let us prove, as an example, one of the compatibilities between the products and the coproducts. If $u,v$ are packed words of respective
lengths $k,l \geq 1$, $\Delta_\nearrow(u\nearrow v)$ is obtained as follows:
\begin{itemize}
\item Consider all the packed words $w$ such that $Pack(w(1)\ldots w(k))=u$, $Pack(w(k+1)\ldots w(k+l))=v$, 
such that $1\notin \{w(k+1),\ldots,w(k+l)\}$ and $\max(w) \in \{w(k+1),\ldots,w(k+l)\}$.
\item Cut all these words into two parts, by separating the letters into two parts according to their orders, such that the first letter of $w$
in the left (smallest) part, and the last letter of $w$ is in the right (greatest) part, and pack the two parts.
\end{itemize}
If $u'\otimes u''$ is obtained in this way, before packing, $u'$ contains $1$, so contains letters $w(i)$ with $i\leq k$,
and $u''$ contains $\max(w)$, so contains letters $w(i)$, with $i>k$. Four cases are possible.
\begin{itemize}
\item $u'$ contains only letters $w(i)$ with $i\leq k$, and $u''$ contains only letters $w(i)$ with $i>k$.
Then $w=(u(1)\ldots u(k)(v(1)+\max(u))\ldots (v(l)+\max(u))$ and $u'\otimes u''=u\otimes v$.
\item $u'$ contains only letters $w(i)$ with $i\leq k$, whereas $u''$ contains letters $w(i)$ with $i\leq k$ and letters $w(j)$ with $j>k$.
Then $u'$ is obtained from $u$ by taking letters $<i$, with $i\geq u(1)$, and $u''$ is a term appearing in $Pack(u_{\mid [k]\setminus [i]})\star v$,
such that there exists $j>k-i$, with $u''(j)=\max(u'')$. Summing all the possibilities, we obtain 
$u'_\uparrow\otimes u''_\uparrow \rightarrow v$.
\item $u'$ contains letters $w(i)$ with $i\leq k$ and letters $w(j)$ with $j>k$, whereas $u''$ contains only letters $w(i)$ with $i>k$.
With the same type of analysis, we obtain $u\uparrow v'_\rightarrow \otimes v''_\rightarrow$.
\item Both $u'$ and $u''$ contain letters $w(i)$ with $i\leq k$ and letters $w(j)$ with $j>k$. We obtain 
$u'_\uparrow \uparrow v'_\rightarrow\otimes u''_\uparrow \rightarrow v''_\rightarrow$.
\end{itemize}
Finally:
$$\Delta_\nearrow(u\nearrow v)=u\otimes v+u'_\uparrow\otimes u''_\uparrow \rightarrow v+u\uparrow v'_\rightarrow \otimes v''_\rightarrow
+u'_\uparrow \uparrow v'_\rightarrow\otimes u''_\uparrow \rightarrow v''_\rightarrow.$$
The fifteen remaining compatibilites are proved following the same lines. \end{proof}\\

{\bf Examples.}
\begin{align*}
(12)\nwarrow (12)&=(1423),\\
(12)\swarrow (12)&=(1312)+(2312)+(2413)+(3412),\\
(12)\searrow (12)&=(1212)+(1213)+(2313)+(2314),\\
(12)\nearrow (12)&=(1223)+(1234)+(1323)+(1324).
\end{align*}

\begin{cor}
$(\WQSym,\rightarrow,\leftarrow)$ and $(\WQSym,\downarrow,\uparrow)$ are free dendriform algebras.
\end{cor}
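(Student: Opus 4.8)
The plan is to obtain freeness from the bidendriform rigidity theorem of \cite{FoissyDend,FoissyDend2}, exactly as in the first Remark of the previous subsection, where the same argument gives the freeness of $(F_\Quad(V),\leftarrow,\rightarrow)$ and $(F_\Quad(V),\uparrow,\downarrow)$. Most of the work is already done: the preceding Theorem equips the augmentation ideal of $\WQSym$ with a quadri-bialgebra structure, and by the Remark following Definition \ref{10} every quadri-bialgebra produces four dendriform bialgebras. For $\WQSym$ these are, up to the dendriform opposite convention, the bialgebras built on the dendriform algebras $(\WQSym,\leftarrow,\rightarrow)$ and $(\WQSym,\uparrow,\downarrow)$ and on their opposites $(\WQSym,\rightarrow^{op},\leftarrow^{op})$ and $(\WQSym,\downarrow^{op},\uparrow^{op})$; the latter two are precisely the dendriform algebras named in the statement.

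First I would confirm that these really are bidendriform bialgebras in the sense of \cite{FoissyDend,FoissyDend2}: the compatibility relations between the two half-products and the two half-coproducts that define a bidendriform bialgebra are among the compatibilities isolated in the Remark after Definition \ref{10}, and hence are consequences of the quadri-bialgebra compatibilities verified in the Theorem. Next I would check the one hypothesis that is not formal, namely connectedness. Here $\WQSym$ is graded by the length of packed words, each homogeneous component is finite-dimensional, the length-one component is one-dimensional, and the reduced coproduct $\tdelta$ strictly lowers the length; so each of the relevant bidendriform bialgebras is connected and graded, and the rigidity theorem applies.

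I would then invoke the bidendriform rigidity theorem: a connected graded bidendriform bialgebra is free as a dendriform algebra (and cofree as a dendriform coalgebra). Applied to the two structures whose products are $\{\rightarrow,\leftarrow\}$ and $\{\downarrow,\uparrow\}$, this yields that $(\WQSym,\rightarrow,\leftarrow)$ and $(\WQSym,\downarrow,\uparrow)$ are free dendriform algebras. Because the dendriform opposite functor is an involutive autoequivalence of the category of dendriform algebras that sends free objects to free objects, it is harmless to pass between a dendriform algebra and its opposite, so the opposite conventions appearing above do not affect the conclusion.

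The main obstacle is not a computation --- the hard combinatorial content was absorbed into the quadri-bialgebra Theorem --- but the bookkeeping: correctly matching the four dendriform bialgebras (and their opposites) produced by the quadri-bialgebra formalism to the two dendriform algebras in the statement, and verifying that the length grading of $\WQSym$ meets the connectedness requirement of the rigidity theorem. Both should be routine once the machinery above is in place.
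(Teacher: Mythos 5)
Your proposal is correct and is exactly the argument the paper intends: the corollary is stated without proof immediately after the quadri-bialgebra theorem for $\WQSym$, and the intended justification is the same as in the first Remark for $F_\Quad(V)$ --- extract the four dendriform bialgebras from the Remark following Definition \ref{10} and apply the bidendriform rigidity theorem of \cite{FoissyDend,FoissyDend2} to the connected graded structure. Your additional care about connectedness and the harmlessness of passing to dendriform opposites is sound and fills in the bookkeeping the paper leaves implicit.
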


{\bf Remarks.} \begin{enumerate}
\item If $A$ is a quadri-algebra, we put:
$$Prim_\Quad(A)=Ker(\tdelta_\nwarrow)\cap Ker(\tdelta_\swarrow)\cap Ker(\tdelta_\searrow)\cap Ker(\tdelta_\nearrow).$$
For any vector space $V$, $A=F_\Quad(V)$ is obviously generated by $Prim_\Quad(A)$, as $V \subseteq Prim_\Quad(A)$.
\item Let us consider the quadri-bialgebra $\FQSym$. Direct computations show that:
\begin{align*}
Prim_\Quad(\FQSym)_1&=Vect(1),\\
Prim_\Quad(\FQSym)_2&=(0),\\
Prim_\Quad(\FQSym)_3&=(0),\\
Prim_\Quad(\FQSym)_4&=Vect((2413)-(2143),(2413)-(3412));
\end{align*}
moreover, the homogeneous component of degree $4$ of the quadri-subalgebra generated by $Prim_\Quad(\FQSym)$ has
dimension $23$, with basis:
$$(1234),(1243),(1324),(1342),(1423),(1432),(2134),(2314),(2314),(2431),$$
$$(3124),(3214),(3241),(3421),(4123),(4132),(4213),(4231),(4312),(4321),$$
$$(2143)+(2413),(3142)+(3412),(2143)-(3142).$$
So $\FQSym$ is not generated by $Prim_\Quad(\FQSym)$, so is not isomorphic, as a quadri-bialgebra, to any $F_\Quad(V)$.
A similar argument holds for $\WQSym$.
\end{enumerate}

\bibliographystyle{amsplain}
\bibliography{biblio}
\end{document}